\documentclass[11pt]{amsart}

%\title[S-stable foliations on branched polyhedra]{An explicit construction of S-stable foliations on branched polyhedra}

\title{S-stable foliations on flow-spines with transverse Reeb flow}

\author[Handa]{Shin Handa}
%\thanks{The first-named author is supported by the Grant-in-Aid for Scientific Research (C), JSPS KAKENHI Grant Number 25400078}
\address{Mathematical Institute, Tohoku University, Sendai, 980-8578, Japan}
%%\email{hanshin.csf@gmail.com}
\email{shin.handa.t5@dc.tohoku.ac.jp}

\author[Ishikawa]{Masaharu Ishikawa}
%\thanks{The first-named author is supported by the Grant-in-Aid for Scientific Research (C), JSPS KAKENHI Grant Number 25400078}
%\address{Faculty of Economics, Keio University, 4-1-1, Hiyoshi, Kouhoku, Yokohama, Kanagawa 223-8521, Japan}
\address{Department of Mathematics, Hiyoshi Campus, Keio University, 
4-1-1 Hiyoshi, Kohoku, Yokohama 223-8521, Japan}
\email{ishikawa@keio.jp}

%\date{  }

\usepackage{amsfonts,amsmath,amssymb,amscd}
\usepackage{amsthm}
\usepackage{latexsym}
\usepackage{graphicx}
\usepackage{psfrag}
\usepackage{color}

\theoremstyle{plain}
\newtheorem*{theorem*}{Theorem}
\newtheorem*{lemma*} {Lemma}
\newtheorem*{corollary*} {Corollary}
\newtheorem*{proposition*}{Proposition}
\newtheorem*{conjecture*}{Conjecture}
\newtheorem{theorem}{Theorem}[section]
\newtheorem{lemma}[theorem]{Lemma}
\newtheorem{corollary}[theorem]{Corollary}
\newtheorem{proposition}[theorem]{Proposition}

\newtheorem{remark}[theorem]{Remark}

\theoremstyle{remark}

\newtheorem*{definition}{Definition}

\theoremstyle{definition}

\newtheoremstyle{citing}% name
  {}%      Space above, empty = `usual value'
  {}%      Space below
  {\itshape}% Body font
  {}%         Indent amount (empty = no indent, \parindent = para indent)
  {\bfseries}% Thm head font
  {.}%        Punctuation after thm head
  {.5em}%     Space after thm head: " " = normal interword space;
        %       \newline = linebreak
  {\thmnote{#3}}% Thm head spec

\theoremstyle{citing}

\textwidth=5.8in
\voffset=0.25in
\oddsidemargin.25in
\evensidemargin.25in
\marginparwidth=.85in

\newcommand{\Real}{\mathbb{R}}

\newcommand{\Int}{\mathrm{Int}\,}

\newcommand{\Nbd}{\mathrm{Nbd}}

\newcommand{\pr}{\mathrm{pr}}

\makeatletter
\@addtoreset{equation}{section}

\makeatother

\begin{document}

\maketitle

\begin{abstract}
The notion of S-stability of foliations on branched simple polyhedrons is introduced by R. Benedetti and C. Petronio in the study of characteristic foliations of contact structures on $3$-manifolds.
We additionally assume that the $1$-form $\beta$ defining a foliation on a branched simple polyhedron $P$ 
satisfies $d\beta>0$, 
which means that the foliation is a characteristic foliation of a contact form whose Reeb flow is transverse to $P$. In this paper, we show that if there exists a $1$-form $\beta$ on $P$
with $d\beta>0$ then we can find a $1$-form with the same property and additionally being S-stable.
We then prove that the number of simple tangency points of an S-stable foliation on a positive or negative flow-spine is at least $2$ and give a recipe for constructing a characteristic foliation of a $1$-form $\beta$ with $d\beta>0$ on the abalone.
\end{abstract}

\section{Introduction}

A flow-spine $P$ is a branched simple spine embedded in an oriented, closed, smooth 3-manifold $M$ such that there exists a non-singular flow in $M$ that is transverse to $P$ and ``constant" in the complement $M\setminus P$. This notion was introduced by I.~Ishii in~\cite{Ish86}. In that paper, he proved that any non-singular flow in $M$ has a flow-spine. Therefore, regarding a Reeb flow on a contact 3-manifold as a flow of its flow-spine, we may use it for studying contact structures on 3-manifolds. 
This setting is analogous to the setting of the correspondence between contact $3$-manifolds and open book decompositions in~\cite{TW75, Gir02}. One of the advantages of this setting is that the contact structure in the complement of a flow-spine is always tight since the Reeb flow is  ``constant''. Thus the study of contact structures via flow-spines divides into two parts, one is the study of contact structures in small neighborhoods of flow-spines and the other is to see what happens by gluing a tight 3-ball to the neighborhoods.

A characteristic foliation on a branched polyhedron embedded in a contact 3-manifold had been studied by Benedetti and Petronio in~\cite{BP00}, 
following the work of Giroux on characteristic foliations on surfaces~\cite{Gir91}. 
Setting the branched polyhedron in a general position, we may assume that the foliation is non-singular on the singular set $S(P)$ of $P$. We further assume that the indices of simple tangency points of the foliation to $S(P)$ are always $+1$ and away from vertices of $S(P)$.
See Figure~\ref{fig3} for the definition of the index of a simple tangency point. A foliation that satisfies the above conditions is called an {\it S-stable} foliation. 
In~\cite{BP00}, they proved several statements. For instance, it is proved that if a characteristic foliation on a branched polyhedron $P$ in a contact $3$-manifold $M$ is S-stable then the contact structure with this foliation is unique in a small neighborhood of $P$ up to contactomorphism. It is also proved that if that contact structure is tight in a neighborhood of $P$ then it extends to a tight contact structure on $M$ and the extended contact structure on $M$ is unique up to contactomorphism. Remark that the Reeb flows of these contact structures may not be transverse to $P$. 
In this paper, we always assume that a contact structure is positive, that is, its contact form $\alpha$ satisfies $\alpha\land d\alpha>0$.

\begin{figure}[htbp]
\includegraphics[width=90mm, bb=163 641 431 712]{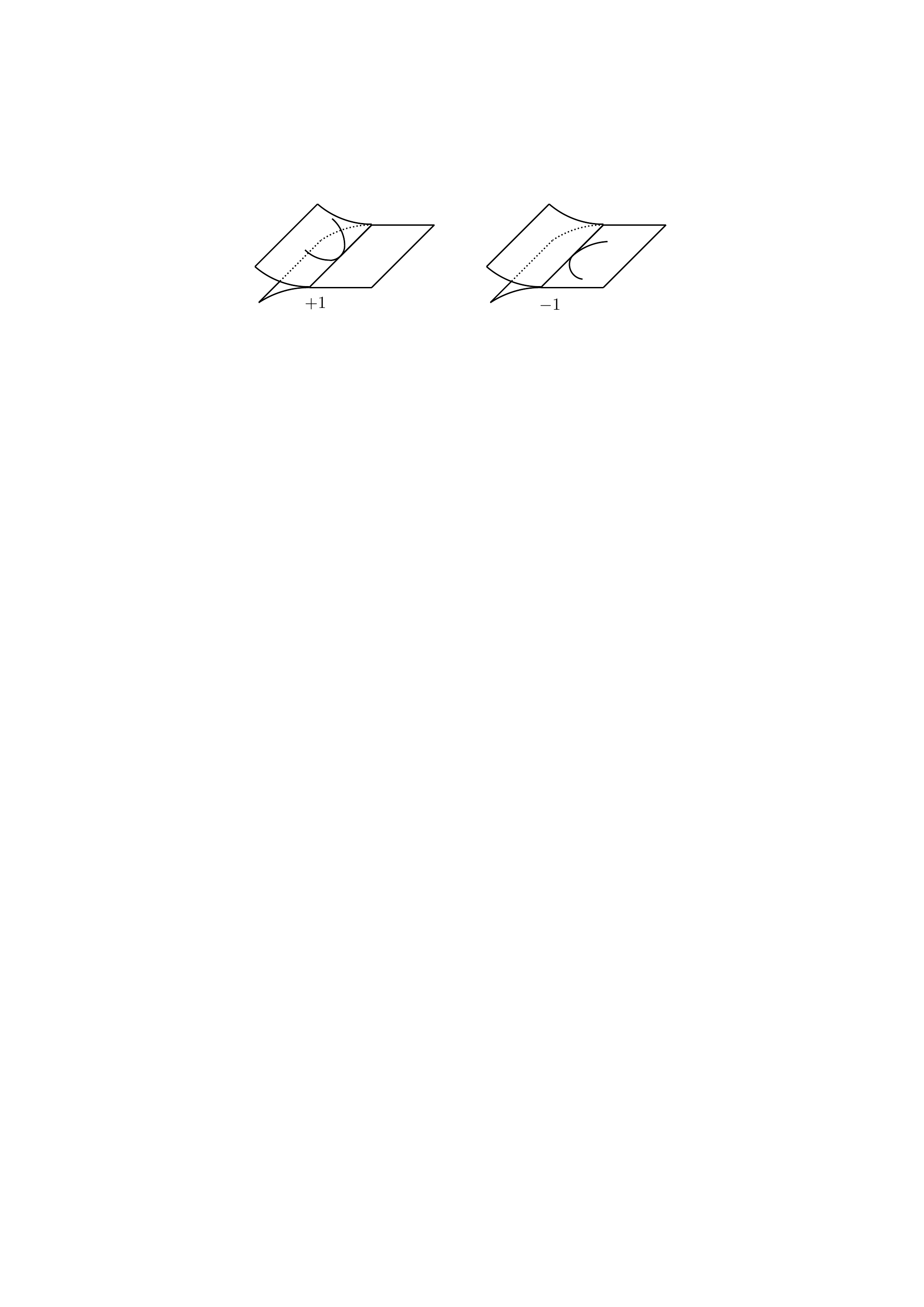}
\caption{The index of simple tangency points.}\label{fig3}
\end{figure}

%In~\cite{IIKN20}, 

%In this paper, we show the existence of a $1$-form $\beta$ on a branched simple polyhedron which defines an S-stable foliation and satisfies $d\beta>0$ under certain condition.

%A contact structure $\xi$ is said to be {\it supported} by a flow-spine $P$ if there exists a contact form $\alpha$ of $\xi$ whose Reeb flow is a flow of $P$. 

Suppose that there exists a contact form $\alpha$ on $M$ 
whose Reeb flow is transverse to a flow-spine $P$.
We choose the orientations of the regions of $P$ so that their intersections with the Reeb flow are positive. 
In this setting, the form $\beta=\alpha|_P$ on $P$ satisfies $d\beta>0$. 
To advance the study in this setting, we need to study a $1$-form $\beta$ on $P$ with $d\beta>0$ and whose characteristic foliation on $P$ is S-stable.
Note that there are many branched simple polyhedrons that admit a $1$-form $\beta$
with $d\beta>0$, see Remark~\ref{rem_S}. 

The aim of this paper is to understand if there is a $1$-form $\beta$ on $P$ with $d\beta>0$ and whose kernel gives an S-stable foliation on $P$ and if there exists a constraint for positions of leaves of S-stable foliations on $P$. The following theorem answers the first question. 

\begin{theorem}\label{thm1}
Let $P$ be a branched simple polyhedron.
If there exists a $1$-form $\beta$ on $P$ with $d\beta>0$ then there exists a $1$-form $\beta'$ on $P$ such that $d\beta'>0$ and the foliation defined by $\beta'=0$ on $P$ is S-stable.
\end{theorem}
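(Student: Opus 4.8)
\emph{Strategy.} The plan is to exploit the fact that S-stability is a condition \emph{only} along the singular set $S(P)$: it asks that $\beta$ be non-vanishing on $S(P)$, that $\beta|_{S(P)}$ have only simple zeros, that these lie away from the vertices, and that each of the resulting tangencies have index $+1$. Since $d\beta>0$ is an open condition in the $C^1$-topology, I would first replace $\beta$ by a $C^1$-small perturbation, still satisfying $d\beta>0$, for which the zeros of $\beta$ are isolated and contained in the interiors of the regions, and for which $\beta|_{S(P)}$ vanishes only at finitely many simple zeros, none of them a vertex. This is a standard transversality argument and uses nothing about $d\beta>0$ beyond its openness. After this step the only possible failure of S-stability is that some of the (now simple, non-vertex) tangency points have index $-1$, and the whole problem reduces to removing these.

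\emph{The local model of a tangency.} Next I would set up coordinates adapted to the branching. Along a triple arc of $S(P)$ the three local sheets share a common tangent plane $\Pi$; choosing coordinates $(x,y)$ on $\Pi$ with $S(P)=\{y=0\}$ and writing the pullback of $\beta$ to each sheet as $P_i\,dx+Q_i\,dy$, the restriction $\beta|_{S(P)}=g(x)\,dx+h(x)\,dy$ is the \emph{same} for all three sheets, because the vector $\partial_y$ on each sheet maps to a common ambient vector along $S(P)$. A tangency occurs where $g(x_0)=0$, and a short computation shows that the tangent leaf on each sheet meets $S(P)$ with the \emph{same} second-order contact $y''=-g'(x_0)/h(x_0)$. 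Consequently the local picture --- on which side of $S(P)$ the tangent leaf on the smooth sheet protrudes, and whether the leaf on the branch sheet is locally convex or concave --- is governed entirely by the sign of $-g'(x_0)/h(x_0)$ together with the fixed co-orientation of the branch. Comparing with Figure~\ref{fig3}, exactly one of the two signs yields index $+1$; I would record which one.

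\emph{Flipping the bad tangencies.} The index of a tangency is $-\operatorname{sign}\bigl(g'(x_0)h(x_0)\bigr)$ times a fixed factor, so it can be reversed either by changing the sign of $g'$ or by changing the sign of $h$ at $x_0$. Since the signs of $g'$ at consecutive zeros of $g$ necessarily alternate, I would instead flip the index by altering $h=\beta(\partial_y)|_{S(P)}$: for each index $-1$ tangency I make $h$ change sign on a small arc containing $x_0$ but no other zero of $g$. The two new zeros of $h$ occur at points where $g\neq0$, so there $\beta|_{S(P)}=g\,dx\neq0$ and the leaf simply crosses $S(P)$ transversally, creating no singular point of the foliation on $S(P)$; at $x_0$ itself $\beta|_{S(P)}=h\,dy\neq0$, so non-singularity on $S(P)$ is preserved. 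Because the value of $d\beta$ along $S(P)$ equals $h'(x)-\partial_y P(x,0)$ and the normal derivative $\partial_y P$ is not constrained by the boundary data $(g,h)$, I can choose it so that $d\beta>0$ on $S(P)$ survives the change of $h$.

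\emph{The main obstacle.} The technical heart --- and the step I expect to be hardest --- is to realize the prescribed change of $\beta|_{S(P)}$ by a correction $\eta$ supported in a thin collar of $S(P)$, vanishing to high order on the outer boundary of the collar so that $\beta'=\beta+\eta$ agrees with $\beta$ outside, \emph{while keeping $d\beta'=d\beta+d\eta>0$ throughout}. The difficulty is that reversing the sign of $h$ is an $O(1)$ change, so $d\eta$ need not be small and could a priori destroy positivity. The point is that $d\beta$ is bounded below by a positive constant on the (compact) collar, so it suffices to build $\eta$ with $d\eta$ bounded below by the negative of that constant; the freedom in the normal derivatives of $\eta$ --- which enter $d\eta$ with a favorable sign and are unconstrained by the boundary flip --- provides exactly the room to do this, at the cost of taking the collar thin. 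I would carry out this construction on each of the three sheets separately, using the common boundary data to guarantee compatibility along $S(P)$, and I would perform all flips on arcs disjoint from the vertices. The resulting $\beta'$ then satisfies $d\beta'>0$ and has only index $+1$ tangencies, i.e.\ defines an S-stable foliation.
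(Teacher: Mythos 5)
Your strategy is viable, but it is genuinely different from the paper's. The paper never modifies the given $\beta$: it only extracts from it, via Stokes' theorem, the tuple of edge integrals $\left(\int_{e_1}\beta,\ldots,\int_{e_m}\beta\right)$, observes that this tuple witnesses the admissibility condition, and then builds a brand-new form from that combinatorial data alone --- explicit pullbacks of $r^2\,d\theta$ near $S(P)$ realizing prescribed edge integrals with only index $+1$ tangencies (Lemma~\ref{lemma31}), extended over the regions by the Thurston--Winkelnkemper lemma (Lemma~\ref{lemma_TW}). This is why the paper can say there is ``no direct relation between $\beta$ and $\beta'$''. You instead keep $\beta$ and perform a surgery supported near $S(P)$. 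Your key local computation is right: since the three sheets share a tangent plane along a triple line, the tangent leaves on all three sheets have the same quadratic contact $-g'(x_0)/h(x_0)$, so the index is determined by $\mathrm{sign}\bigl(g'(x_0)h(x_0)\bigr)$ together with the fixed branching data; this is corroborated by Lemma~\ref{lemma31}, where the two tangencies created on an edge with $E_j<0$ both have index $+1$ --- impossible if the index were $\mathrm{sign}(g')$ alone, since $g'$ alternates at consecutive zeros of $g$, exactly as you note. What your route buys is a relative statement the paper does not prove: $\beta'$ agrees with $\beta$ outside a neighborhood of $S(P)$. What the paper's route buys is the stronger Proposition~\ref{thm_adm} (existence of such a form is \emph{equivalent} to admissibility), the bound of at most two tangency points per triple line, and the constructive recipe reused in Section~\ref{sec6}.

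Two points in your hardest step need repair in a full write-up. First, the calibration is backwards: shrinking the collar creates no room, because the requirement $\partial_x b-\partial_y a>-\min d\beta$ (writing $\eta=a\,dx+b\,dy$ on a sheet) is scale-invariant in the normal direction. What must be thin is the layer \emph{inside} the collar where the $dy$-component is damped from $\tilde h-h$ to $0$, relative to the full collar width: then the compensating normal derivative $\partial_y a$ --- constrained by $a=0$ on $S(P)$ and on the outer boundary, hence $\int\partial_y a\,dy=0$ on every normal segment --- can absorb $\partial_x b$ up to an error of size $\max|\tilde h'-h'|$ times the ratio of layer width to collar width, which can be made smaller than $\min d\beta$. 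Second, your $\eta$ cannot keep $\beta'$ non-vanishing in the collar: flipping one index changes the winding of the pair $(g,h)$ along the modified arc by a full turn, so zeros of $\beta'$ of nonzero total index must appear in the interior of each of the three adjacent sheets. This is not an obstruction --- S-stability constrains the foliation only along $S(P)$, and both elliptic points and saddles are compatible with $d\beta'>0$ --- and when $P$ is a flow-spine it is exactly what the paper's Poincar\'e--Hopf identity (Lemma~\ref{lemma_PH}) forces, since flipping a tangency changes $t_+-t_-$ by $2$ and hence $e-h$ by $1$; but the bookkeeping should be verified rather than left implicit, and any attempt to build $\eta$ avoiding new zeros would fail.
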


%\begin{theorem}\label{thm1}
%Let $P$ be a branched simple polyhedron.
%Then, there exists a $1$-form $\beta$ on $P$ with $d\beta>0$ such that the foliation on $P$ given by $\beta=0$ is S-stable if and only if $P$ is admissible.
%\end{theorem}

Note that it is proved in~\cite{BP00} that any characteristic foliation on a branched simple polyhedron $P$ in a contact $3$-manifold $M$ can be made to be S-stable by $C^0$-perturbation of $P$ in $M$. In our claim, there is no direct relation between $\beta$ and $\beta'$.

Our construction of an S-stable foliation is very efficient in the sense that the number of simple tangency points is very small 
%%(at most $2m$).
(at most twice the number of triple lines). 
On the other hand, it is difficult to find an S-stable foliation defined by a $1$-form $\beta$ with $d\beta>0$ and without simple tangency points.
Actually, in Theorem~\ref{thm2} below, we show that such a foliation does not exist if a branched simple polyhedron is a flow-spine and satisfies a certain condition.
A region of a branched simple polyhedron is called a {\it preferred region} if the orientations of all edges and circles on its boundary are opposite to the one induced from the orientation of the region defined by the branching of $P$.
%Let $V(P)$ denote the set of true vertices of $P$ and $|V(P)|$ the number of elements in $V(P)$.
Note that the number of simple tangency points is always even, see Lemma~\ref{lemma_even}.

\begin{theorem}\label{thm2}
If a flow-spine $P$ has a preferred region then any foliation on $P$ defined by a $1$-form $\beta$ with $d\beta>0$ has at least two simple tangency points.
\end{theorem}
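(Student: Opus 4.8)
The plan is to localize the problem to the preferred region itself, turn it into an index/flux statement, and then invoke the parity recorded in Lemma~\ref{lemma_even}. Let $R$ be a preferred region of the flow-spine $P$. Since the regions of a flow-spine are open $2$-cells, I may regard $\overline{R}$ as a closed disk $D$ (via the characteristic map of the cell) with $\chi(D)=1$, whose boundary circle maps to $S(P)$ as a union of edges and vertices, possibly traversing whole triple circles. Because $d\beta>0$, Stokes' theorem gives
\[
\int_{\partial R}\beta \;=\; \int_{R} d\beta \;>\;0 .
\]
The simple tangency points lying on $\partial R$ are exactly the zeros of the restriction $\beta|_{\partial R}$, i.e. the points where $\ker\beta$ is tangent to $S(P)$. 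Since Lemma~\ref{lemma_even} tells us the total number of simple tangency points is even, it suffices to prove that $\partial R$ carries at least one of them; equivalently, it suffices to rule out that $\ker\beta$ is everywhere transverse to $\partial R$.

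So I would argue by contradiction: assume $\ker\beta$ has no tangency along $\partial R$. Then $\beta|_{\partial R}$ is nowhere zero, and the co-oriented foliation crosses each boundary edge in a single direction, the inward/outward type being allowed to change only at the vertices of $S(P)$ (which carry no simple tangencies). The heart of the proof is then a local model at a triple line: in branch-adapted coordinates one identifies the three germs of regions along an edge $e$, singles out the preferred region as the ``lone'' sheet (the one whose boundary orientation is opposite to the branching orientation of $e$, the other two sheets lying on the branch side), and uses $d\beta>0$ on the three adjacent regions together with the matching of $\beta$ along $e$ to fix the sign of the flux of $\ker\beta$ across $e$. The aim of this computation is to show that along a preferred edge the crossing contributes to $\int_{\partial R}\beta$ with the sign forced by the branching; assembling these contributions around the whole boundary circle, the absence of tangencies would then force $\int_{\partial R}\beta\le 0$, contradicting the displayed inequality. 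Hence $\partial R$ carries at least one simple tangency point, and by the evenness in Lemma~\ref{lemma_even} it carries at least two.

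The step I expect to be the main obstacle is precisely this local analysis at the triple lines, namely pinning down how the preferred (anti-)orientation controls the sign of $\beta$ along $\partial R$. The subtlety is that the sign of $\beta|_e$ is \emph{not} determined by the data on $R$ alone: on an abstract disk one can easily produce a $1$-form with $d\beta>0$ that is everywhere outward-pointing and tangency-free (for instance the radial form $x\,dy-y\,dx$), so the argument must genuinely exploit the branching. The point is that $e$ is shared with the two branch-side sheets, that $\beta$ agrees with their restrictions along $e$, and that $d\beta>0$ holds on those sheets as well; reconciling these three positivity conditions with the matching along $e$ is what should obstruct the tangency-free outward configuration on a preferred region. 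Two further points need care: the behavior at the vertices of $S(P)$, where the crossing type may switch without producing a tangency, and the treatment of boundary components that are whole triple circles rather than unions of edges. Both are handled by the same local-to-global bookkeeping once the edge computation is in place.
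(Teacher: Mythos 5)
Your two preliminary reductions are sound: by Lemma~\ref{lemma_even} it is enough to produce one simple tangency point, and Stokes' theorem does give $\int_{\partial R}\beta=\int_R d\beta>0$ for the preferred region $R$ (this inequality plays exactly the same role as Lemma~\ref{lemma33} in the paper). But the step you yourself call the ``heart of the proof'' --- a local analysis at the triple lines showing that, with no tangencies along $\partial R$, the branching forces $\int_{\partial R}\beta\le 0$ --- is not merely unfinished; no such local analysis can exist. Both crossing directions are locally realizable compatibly with all the positivity constraints. Model an edge as the $x$-axis oriented by $+x$, the two branch-side sheets as the upper half-plane $\{y\ge 0\}$ with orientation $dx\wedge dy$ (induced boundary orientation $+x$), and the preferred sheet as the lower half-plane $\{y\le 0\}$ with orientation $dx\wedge dy$ (induced boundary orientation $-x$). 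Then, for small $c>0$, the form $\beta=-(c+y)\,dx$ on all three sheets matches along the edge, satisfies $d\beta=dx\wedge dy>0$ on each sheet with respect to its branching orientation, is tangency-free, and restricts \emph{negatively} to the oriented edge; replacing it by $\beta=(c-y)\,dx$ gives the same picture with \emph{positive} restriction. So the matching of $\beta$ along $e$ together with $d\beta>0$ on the three adjacent sheets puts no constraint at all on the sign of $\beta|_e$, and the ``tangency-free outward configuration'' you hope to obstruct is locally unobstructed. (Note also that your target statement --- a tangency \emph{on} $\partial R$ --- is strictly stronger than the theorem, which only asserts tangencies somewhere on $S(P)$.)

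The decisive evidence that this strategy cannot be repaired is that it never uses the hypothesis that $P$ is a flow-spine: every ingredient you invoke (a preferred region, the branching, $d\beta>0$, S-stability) is available on an arbitrary branched simple polyhedron, and in that generality the statement is false. The paper's own Section~\ref{ex3} (Figure~\ref{fig9}) exhibits a branched \emph{standard} spine with a preferred region (the region containing $D_0$) carrying an S-stable foliation defined by a $1$-form $\beta$ with $d\beta>0$ and with \emph{no} simple tangency points; in that example your assumed configuration (foliation transverse to $\partial R$, yet $\int_{\partial R}\beta>0$) actually occurs, so no contradiction can be derived from your hypotheses. What the paper uses instead, and what your outline lacks, is global sign propagation along the singular set: if there are no tangencies, $\beta$ is nowhere zero on $S(P)$, hence has constant sign along each immersed circuit of $S(P)$; since the singular set of a flow-spine is a \emph{single} circuit (an Euler circuit), all edges acquire the same sign --- this is the content of the H-piece analysis and Lemma~\ref{lemma41}. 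The all-positive case is then excluded by the preferred region (your Stokes inequality), but the all-negative case requires a second global input absent from your proposal: summing $\int_{\partial R_i'}\beta$ over \emph{all} regions, each edge is counted twice with one sign and once with the other, so the total equals $\sum_j\int_{e_j}\beta<0$, contradicting $d\beta>0$. The sign switches of $\beta|_{S(P)}$ at the corners of $\partial\bar R$ --- which you correctly note can occur at vertices without creating tangencies --- are precisely what an argument localized to $R$ and its adjacent sheets cannot control; only the single-circuit property rules them out.
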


A point on a simple polyhedron that has a neighborhood shown in Figure~\ref{fig1}~(iii) is called a {\it vertex}.
A branched simple polyhedron has two kinds of vertices: 
the vertex shown on the middle in Figure~\ref{fig2} is called a {\it vertex of $\ell$-type}
and the one on the right is called {\it of $r$-type}.
A flow-spine $P$ is said to be {\it positive} if it has at least one vertex and all vertices are of $\ell$-type. 
In~\cite{IIKN20} it is shown that the map sending a positive flow-spine $P$ to 
%a
the contact structure 
%supported by $P$
whose Reeb flow is a flow of $P$ gives a surjection from the set of positive flow-spines to the set of contact $3$-manifolds up to contactomorphism. It is also proved that we cannot expect the same result without restricting the source to the set of positive flow-spines. 
Thus, the positivity is important when we study contact $3$-manifolds using flow-spines.
We say that a flow-spine 
%%$P$ 
is {\it negative} if it has at least one vertex and all vertices are of $r$-type.
 
If a flow-spine is either positive or negative then it always has a preferred region. Hence the following corollary holds.

\begin{corollary}\label{cor1}
Let $P$ be a positive or negative flow-spine of a closed, oriented, smooth $3$-manifold $M$. 
If the Reeb flow of a contact form $\alpha$ on $M$ is a flow of $P$ 
then the characteristic foliation of $\ker\alpha$ on $P$ has at least two simple tangency points.
\end{corollary}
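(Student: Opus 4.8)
The plan is to obtain the corollary as a direct application of Theorem~\ref{thm2}, so that the only work is to translate the contact-geometric hypotheses into the hypotheses of that theorem and to check that a positive or negative flow-spine meets its combinatorial assumption. No appeal to Theorem~\ref{thm1} or to S-stability is needed, since Theorem~\ref{thm2} already applies to \emph{any} foliation defined by a $1$-form $\beta$ with $d\beta>0$.

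First I would pass from the contact form to a $1$-form on $P$. Since the Reeb flow of $\alpha$ is a flow of $P$, it is in particular transverse to $P$; as in the introduction I orient each region of $P$ so that the Reeb flow crosses it positively and set $\beta:=\alpha|_{TP}$. Writing $R$ for the Reeb vector field, the splitting $TM=\Real R\oplus\ker\alpha$ together with $\iota_R\,d\alpha=0$ shows that $d\alpha$ depends only on the $\ker\alpha$-components of its arguments and descends to $\ker\alpha$, where it is a positive area form because $\alpha\land d\alpha>0$. Each region, being transverse to $R$, projects isomorphically onto $\ker\alpha$ along $R$, and with the chosen orientation this projection is orientation preserving, so $d\beta=(d\alpha)|_{TP}>0$. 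Moreover, by definition the characteristic foliation of $\ker\alpha$ on $P$ integrates the line field $\ker\alpha\cap TP=\ker(\alpha|_{TP})=\ker\beta$, so it is exactly the foliation defined by $\beta=0$, and its simple tangency points are those counted by Theorem~\ref{thm2}.

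It remains to see that a positive or negative flow-spine always has a preferred region, and this is the one substantive point. The branching orients every region and hence induces, along each region on its boundary, an orientation on every edge and closed curve of the singular set; a preferred region is one for which all of these induced orientations are reversed relative to its own boundary orientation. I would establish existence combinatorially, tracking how the induced orientations propagate across the vertices and using that in the positive (resp.\ negative) case every vertex is of $\ell$-type (resp.\ $r$-type), so that the local sign pattern is uniform and forces a region whose boundary orientation is ``incoming'' everywhere. I expect this combinatorial step to be the main obstacle; granting it, Theorem~\ref{thm2} applied to $\beta$ immediately produces at least two simple tangency points of the characteristic foliation of $\ker\alpha$ on $P$.
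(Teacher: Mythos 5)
Your first paragraph is fine: deriving $d\beta>0$ for $\beta=\alpha|_{TP}$ from transversality of the Reeb flow and identifying the characteristic foliation with the foliation $\beta=0$ is exactly the translation the paper takes for granted (it is stated in the introduction), and reducing the corollary to Theorem~\ref{thm2} is the correct top-level strategy. The problem is that everything you call ``the one substantive point'' --- that a positive or negative flow-spine always has a preferred region --- is precisely what you do not prove. You sketch a hope (``tracking how the induced orientations propagate across the vertices \dots forces a region whose boundary orientation is incoming everywhere'') and then say ``granting it''. That is the entire content of the corollary beyond Theorem~\ref{thm2}, so as written the proposal has a genuine gap.

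Moreover, the suggested local propagation is unlikely to close it. Being a preferred region is a global condition: along each triple line exactly one of the three adjacent region-sides induces the minority orientation, so among the $3m$ region-sides only $m$ are ``minority'' sides, and a preferred region must consist exclusively of such sides along its whole boundary; uniformity of the vertex type ($\ell$- or $r$-type) is a local constraint and by itself does not force such a region to exist. The paper's argument is genuinely global and uses that $P$ is a spine: it passes to the DS-diagram, i.e.\ the trivalent graph that $S(P)$ induces on the boundary sphere of the closed ball $B^3$ obtained as the geometric completion of $M\setminus P$. Lemma~\ref{lemma_disk} shows, by an Euler-characteristic count for the subgraph of the DS-diagram lying in the disk $S^+$, that there is an \emph{internal} region of $S^+$ (one not touching the E-cycle) homeomorphic to a disk; then \cite[Lemma~4.6]{IIKN20} is invoked to conclude that for a positive flow-spine an internal region is always preferred, with the remark that the same proof works in the negative case. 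The fact that $M\setminus P$ is an open ball is essential to this argument and never enters a purely local analysis at vertices, so to complete your proof you would either have to reproduce this DS-diagram/Euler-characteristic argument (plus the cited lemma) or find a genuinely new combinatorial proof of the existence of a preferred region --- which you have not done.
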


Remark that if $P$ is a positive flow-spine then there exists a contact form $\alpha$ on $M$   whose Reeb flow is a flow of $P$
and such a contact structure is unique up to contactomorphism, which is proved 
in~\cite[Theorem~1.1]{IIKN20}.
On the other hand, if $P$ is a negative flow-spine, we do not know if there exists such a contact form or not. 
%Suppose that $P$ is negative, such a contact structure exists and its characteristic foliation is S-stable. Then  the contact structure on a small neighborhood of $P$ in $M$ is unique up to contactomorphism by~\cite{BP00}. If, further, the contact structure on $M$ is tight then it is unique on $M$ up to contactomorphism, also by~\cite{BP00}.}

As we mentioned, our second aim is to understand if there is a constraint for S-stable characteristic foliations in our setting, and the above corollary gives some insight into this question.
Furthermore, 
in Section~\ref{ex1}, we will give an example of an S-stable foliation given by a $1$-form $\beta$ with $d\beta>0$ on the abalone explicitly,
in which we can see that there exists a constraint for the positions of leaves of S-stable characteristic foliations, see Remark~\ref{rem62}.
We will also give a branched standard spine that admits an S-stable foliation defined by $\beta=0$ with $d\beta>0$ and without simple tangency points, see Section~\ref{ex3}. 
%Example~\ref{ex3}.
We do not know if there exists a flow-spine that admits an S-stable foliation defined by a $1$-form $\beta$ with $d\beta>0$ and without simple tangency points.

This paper is organized as follows. In Section~\ref{sec2}, we recall some terminologies of polyhedrons that we use in this paper. Section~\ref{sec3} and Section~\ref{sec4} are devoted to the proofs of Theorems~\ref{thm1} and~\ref{thm2}, respectively. In Section~\ref{sec5}, we shortly introduce the DS-diagram of a flow-spine and give the proof of Corollary~\ref{cor1}. 
In Section~\ref{sec6}, after giving a Poincar\'{e}-Hopf lemma for flow-spines, we give an example of an S-stable foliation given by a $1$-form $\beta$ with $d\beta>0$ on the abalone explicitly. We also give an example of a branched standard spine that admits an S-stable foliation given by a $1$-form $\beta$ with $d\beta>0$ and without simple tangency points.

We would like to thank Ippei Ishii for precious comments and especially for telling us the $3$-manifold of the spine in Figure~\ref{fig9}.
We are also grateful to Yuya Koda and Hironobu Naoe for useful conversation.
Finally, we thank the anonymous referee for insightful comments on improving the paper.  
This work is partially supported by JSPS KAKENHI Grant Number JP17H06128.
The second author is supported by JSPS KAKENHI Grant Numbers JP19K03499 
and Keio University Academic Development Funds for Individual Research.

\section{Preliminaries}\label{sec2}

In this section, we recall terminologies of polyhedrons used in this paper.

\subsection{Simple polyhedron}\label{sec21}

A polyhedron $P$ is said to be {\it simple} if every point on $P$ has a neighborhood represented by one of the models shown in Figure~\ref{fig1}.
Each connected component of the set of points with the model~(i) is called a {\it region},
that with the model~(ii) is called a {\it triple line} and that with the model~(iii) is called a {\it true vertex},
which we call a {\it vertex} for short. 
Let $S(P)$ denote the union of triple lines and vertices of $P$, which is called the {\it singular set} of $P$. 
A triple line is either an open arc, called an {\it edge}, or a circle.
A simple polyhedron $P$ is said to be {\it standard}\footnote{It is also called a {\it special} polyhedron.} if every connected component of $P\setminus S(P)$ is homeomorphic to an open disk and every triple line is an open arc.

\begin{figure}[htbp]
\includegraphics[width=120mm, bb=129 612 551 712]{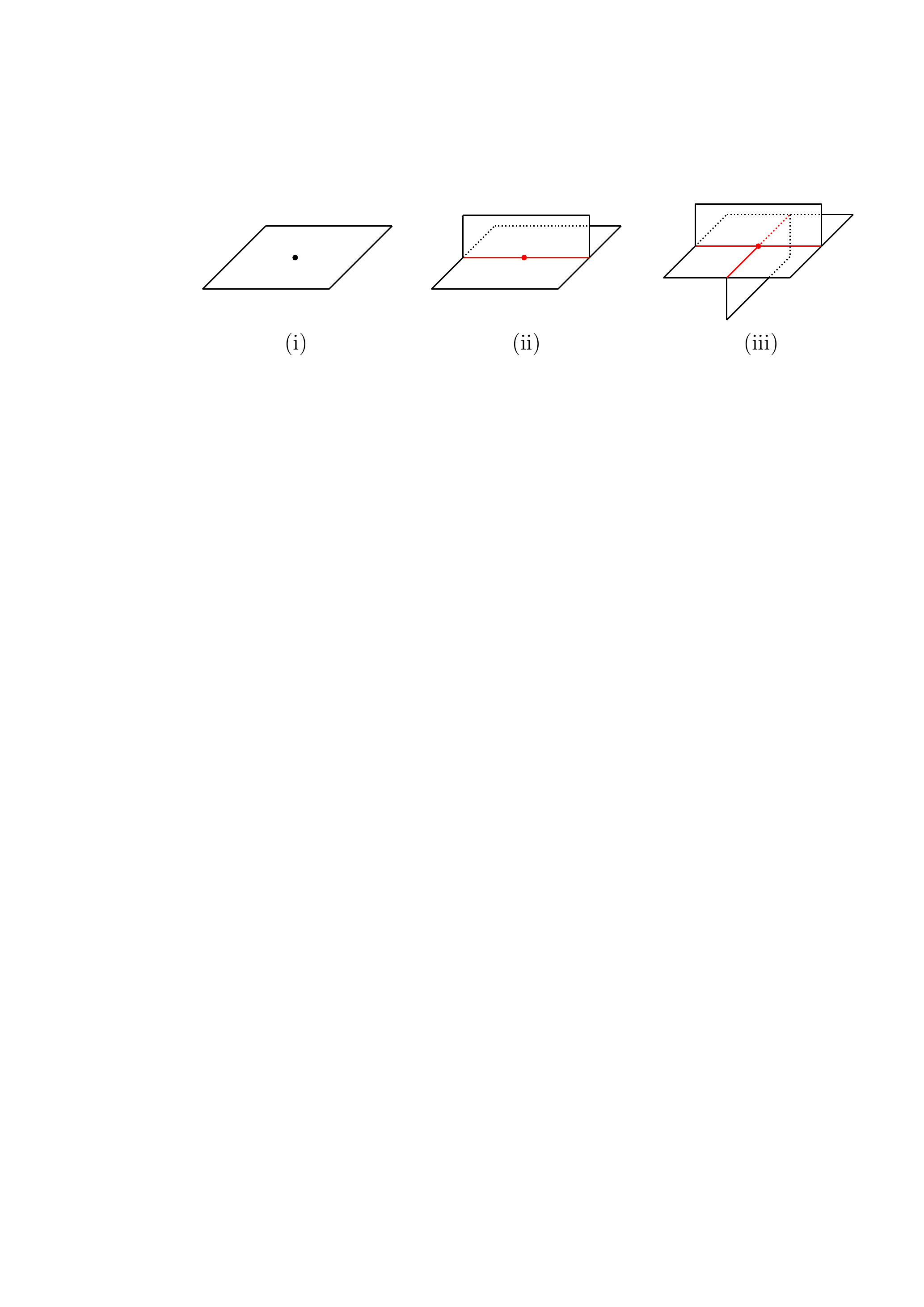}
\caption{The local models of a simple polyhedron.}\label{fig1}
\end{figure}

Let $P$ be a simple polyhedron embedded in an oriented, $3$-manifold $M$ and assume that each region of $P$ is orientable.
An assignment of orientations to the regions of $P$ such that for any triple line the three orientations induced from those of the adjacent regions do not coincide is called a {\it branching}.
A simple polyhedron equipped with a branching is called a {\it branched polyhedron}.
If a branched polyhedron is standard then it is called a {\it branched standard polyhedron}.
For a branched polyhedron $P$, we define the orientation of each triple line of $S(P)$ by the orientation induced from the two adjacent regions, see Figure~\ref{fig2}.

\begin{figure}[htbp]
\includegraphics[width=120mm, bb=129 619 584 712]{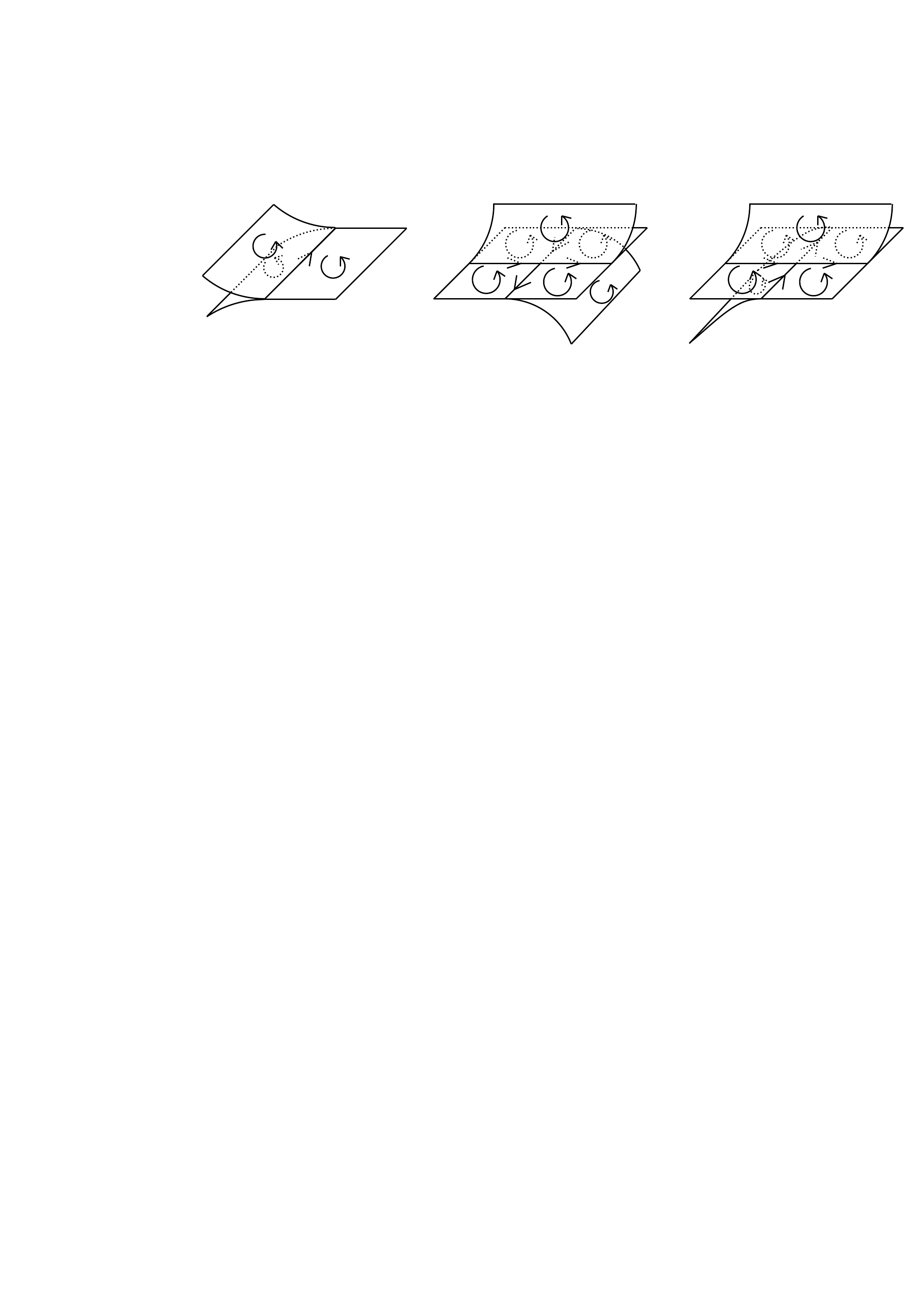}
\caption{The orientation of a triple line induced from the branching.}\label{fig2}
\end{figure}

A region of a branched simple polyhedron is called a {\it preferred region} if the orientations of all edges on its boundary are opposite to the one induced from the orientation of the region defined by the branching of $P$.

\subsection{Spine and flow-spine}

Let $M$ be a closed, connected, oriented $3$-manifold and $P$ be a simple polyhedron embedded in $M$. The polyhedron $P$ is called a {\it spine} of $M$ if  $M\setminus\text{Int\,} B$ collapses to $P$, where $B$ is a $3$-ball in $M\setminus P$. If a spine is standard (resp. branched) then it is called a {\it standard} (resp. {\it branched}) {\it spine}. 

The singular set $S(P)$ of a branched simple polyhedron $P$ 
can be regarded as the image of an immersion of a finite number of circles. 
The immersion has only normal 
%crossing as shown on the right in Figure~\ref{fig2}. 
crossings as shown in Figure~\ref{fig2}. 
A flow-spine is defined from a non-singular flow in a closed, connected, 
oriented, smooth $3$-manifold and a disk $D$ intersecting all orbits of the flow transversely
by floating the boundary of $D$ smoothly until it arrives in the disk $D$ itself.
See~\cite{Ish86} for the precise definition (cf.~\cite{IIKN20}). 
By the construction, the flow is positively transverse to the flow-spine. 
We can easily see that a branched simple polyhedron $P$ is a flow-spine if and only if 
$S(P)$ is the image of an immersion of one circle.

\subsection{Admissibility condition}

Let $P$ be a branched simple polyhedron. 
To have a $1$-form $\beta$ on $P$ with $d\beta>0$, 
$P$ need satisfy the following admissibility condition. 
Let $R_1,\ldots,R_n$ be the regions of $P$.
Regions and triple lines of $P$ are oriented by the branching.
Let $\bar R_i$ be the metric completion of $R_i$ with the path metric inherited from a Riemannian metric on $R_i$. 
Let $\kappa_i:\bar R_i\to P$ be the natural extension of the inclusion $R_i\to P$. 

\begin{definition}
A branched simple polyhedron $P$ is said to be {\it admissible} if there exists an assignment of real numbers $x_1,\ldots,x_m$ to the triple lines $e_1,\ldots,e_m$, respectively, of $P$ such that
for any $i\in\{1,\ldots,n\}$
\begin{equation}\label{eq_adm}
   \sum_{\tilde e_j\subset \partial \bar R_i}\varepsilon_{ij} x_j>0,
\end{equation}
where $\tilde e_j$ is an open arc or a circle on $\partial \bar R_i$ such that
$\kappa_i|_{\tilde e_j}:\tilde e_j\to e_j$ is a homeomorphism, and $\varepsilon_{ij}=1$ if the orientation of $e_j$ coincides with that of $\kappa_i(\tilde e_j)$ induced from 
the orientation of $R_i$ and $\varepsilon_{ij}=-1$ otherwise.
\end{definition}

\section{Proof of Theorem~\ref{thm1}}\label{sec3}

Theorem~\ref{thm1} will follow from the following proposition.

\begin{proposition}\label{thm_adm}
Let $P$ be a branched simple polyhedron.
Then, there exists a $1$-form $\beta$ on $P$ with $d\beta>0$ such that the foliation on $P$ defined by $\beta=0$ is S-stable if and only if $P$ is admissible.
\end{proposition}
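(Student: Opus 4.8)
The plan is to prove the two implications separately, passing between a $1$-form and the numerical data on the triple lines by means of Stokes' theorem. Throughout, I orient each triple line $e_j$ by the branching and set $x_j=\int_{e_j}\beta$.

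First I would treat the ``only if'' direction, which is the easy one; here $d\beta>0$ already suffices, so S-stability is not needed. Applying Stokes' theorem on the metric completion $\bar R_i$ of each region, whose boundary $\partial\bar R_i$ decomposes into the lifts $\tilde e_j$ of the triple lines, I would compute
\[
0<\int_{\bar R_i}d\beta=\int_{\partial\bar R_i}\beta=\sum_{\tilde e_j\subset\partial\bar R_i}\varepsilon_{ij}\int_{e_j}\beta=\sum_{\tilde e_j\subset\partial\bar R_i}\varepsilon_{ij}x_j,
\]
the sign $\varepsilon_{ij}$ recording exactly the discrepancy between the boundary orientation of $\tilde e_j$ induced from $R_i$ and the chosen orientation of $e_j$. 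Since this positivity holds for every $i$, the numbers $x_1,\dots,x_m$ witness that $P$ is admissible.

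For the ``if'' direction I would argue by construction in two stages, realizing a given admissible assignment $x_1,\dots,x_m$ by a $1$-form. In the first stage I build $\beta$ only on a regular neighborhood of the singular set $S(P)$. Along each triple line I prescribe $\beta$ so that $\int_{e_j}\beta=x_j$ and so that the function $t\mapsto\beta(\dot e_j)(t)$ has only nondegenerate zeros, all disjoint from the vertices; these zeros are precisely the simple tangency points of the foliation to $S(P)$, and I would choose the local model at each zero so that its index is $+1$ (when $x_j\neq 0$ one can keep $\beta(\dot e_j)$ of constant sign and introduce no zero at all, while a vanishing $x_j$ forces a sign change and hence an even number of tangencies). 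I would also arrange $\beta\neq 0$ along $S(P)$, so that the foliation is nonsingular there, and fix compatible local models on the three sheets meeting along each triple line and near each vertex so that $d\beta>0$ holds throughout the neighborhood. In the second stage I extend $\beta$ across each region: by construction the restriction of $\beta$ to $\partial\bar R_i$ has integral $\sum_j\varepsilon_{ij}x_j$, which is positive by admissibility, and I would invoke the standard fact that on a compact oriented surface with boundary a boundary $1$-form with positive total integral extends to a $1$-form with $d\beta>0$. Concretely, choosing a positive area form $\omega$ on $\bar R_i$ that agrees with $d\beta$ near the boundary and satisfies $\int_{\bar R_i}\omega=\sum_j\varepsilon_{ij}x_j$, the classes of $\omega$ and of the exterior derivative of any fixed extension coincide in $H^2(\bar R_i,\partial\bar R_i;\Real)$, so they differ by $d\eta$ with $\eta$ vanishing on $\partial\bar R_i$; correcting the extension by $\eta$ yields $d\beta=\omega>0$. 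Performing this on every region produces a global $\beta$ with $d\beta>0$ whose foliation is S-stable.

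I expect the main obstacle to be the first stage: simultaneously forcing $d\beta>0$ on the neighborhood of $S(P)$, keeping $\beta$ nonvanishing on $S(P)$, and controlling the tangencies so that they are simple, of index $+1$, and disjoint from the vertices, all while matching the prescribed integrals $x_j$ consistently across the three sheets at each triple line and through each vertex. The region-extension step is comparatively routine once this boundary data is in place.
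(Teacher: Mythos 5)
Your ``only if'' direction is exactly the paper's argument (Stokes' theorem on each $\bar R_i$), and your second stage is sound: it amounts to re-proving the Thurston--Winkelnkemper extension lemma, which the paper simply quotes as Lemma~\ref{lemma_TW} and applies region by region. The overall skeleton --- build $\beta$ near $S(P)$ realizing the admissible data, then extend over the regions --- is also the paper's. The problem lies in your first stage, and it is not merely unexecuted detail.

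The parenthetical claim that ``when $x_j\neq 0$ one can keep $\beta(\dot e_j)$ of constant sign and introduce no zero at all'' is false, and a construction following it cannot be completed. The constraint you are missing propagates through the vertices: since tangencies must be simple and away from vertices, the foliation is transverse to $S(P)$ near each vertex, so on each of the two smooth, consistently oriented arcs of the immersed curve $S(P)$ passing through a vertex, $\beta$ evaluated on the tangent is nonvanishing and hence of constant sign across the vertex. Thus the sign of $\beta(\dot e_j)$ can change only at tangency points, and with no tangencies it must be constant along each immersed circle of $S(P)$. Now take the abalone $A$: it is a flow-spine, so $S(A)$ is a single Euler circuit, and every admissible assignment satisfies $E_1<0<E_2$, i.e., the prescribed signs are mixed along that one circuit. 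Your recipe would force $\beta(\dot e_1)<0$ and $\beta(\dot e_2)>0$ up to the vertex, producing a sign jump there --- that is, a tangency or singular point at the vertex, which S-stability forbids. (Put globally: if your claim held, any admissible data with all $x_j\neq 0$ would yield an S-stable foliation with no simple tangency points, contradicting Theorem~\ref{thm2} for any flow-spine with a preferred region, such as the abalone.) The paper's Lemma~\ref{lemma31} resolves exactly this point with an explicit local model: each vertex neighborhood is projected to $\Real^2$ so that $\beta_0=\mathrm{pr}^*(r^2d\theta)$ is positive on all four edge-germs (counter-clockwise convention), and each edge with $E_j\le 0$ then receives exactly two simple tangency points arranged to have index $+1$, the pullback of $r^2d\theta$ delivering both $d\beta_0>0$ and the prescribed integral, read off as a signed area. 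Your proof needs this mechanism --- a fixed sign convention at all vertices together with the forced insertion of paired index-$+1$ tangencies on edges whose data conflicts with it --- to go through.
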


\begin{remark}\label{rem_S}
It is known that many flow-spines satisfy the admissibility condition, see~\cite[Section~4.1]{IIKN20}.
For instance, a branched standard spine in a rational homology $3$-sphere is admissible. 
Such branched simple polyhedrons always satisfy the condition in Theorem~\ref{thm1}.
\end{remark}

Let $P$ be a branched simple polyhedron, $Q$ be a small compact neighborhood of $S(P)$ in $P$, 
$R_1',\ldots,R_n'$ be connected components of  $P\setminus\text{Int\,}Q$ and $e_1,\ldots,e_m$ be triple lines of $P$. The orientation of $R_i'$ is defined from the branching of $P$ and that of $e_j$ is defined as explained in Section~\ref{sec21}.

\begin{lemma}\label{lemma31}
Suppose that $P$ is admissible. Then there exists a $1$-form $\beta_0$ on $Q$ such that 
\begin{itemize}
\item[(1)] the foliation on $Q$ defined by $\beta_0=0$ is S-stable, 
\item[(2)] $\int_{\partial R_i'}\beta_0>0$ for $i=1,\ldots,n$, and
\item[(3)] $d\beta_0>0$ on $Q$.
\end{itemize}
\end{lemma}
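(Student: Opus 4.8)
The plan is to build \(\beta_0\) from explicit local models and then patch them together. I first fix a solution \((x_1,\dots,x_m)\) of the admissibility system \eqref{eq_adm}; these numbers will serve as the prescribed ``flux'' of \(\beta_0\) along the triple lines, and their signed sum over \(\partial\bar R_i\) is exactly what will yield the positivity in~(2). I would decompose \(Q\) into small neighborhoods of the vertices together with the remaining parts, which are product neighborhoods of sub-arcs of the triple lines, construct \(\beta_0\) on each type of piece, and glue. Throughout, keeping \(\beta_0\) nowhere zero on \(Q\) guarantees that the foliation \(\beta_0=0\) is nonsingular on \(S(P)\); a tangency to a triple line occurs not where \(\beta_0=0\) but where the restriction of \(\beta_0\) to that line vanishes.

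Over a product neighborhood of a triple line \(e_j\), which is three half-strips glued along \(e_j\), I would take coordinates \((s,u)\) with \(e_j=\{u=0\}\) and \(s\) in the direction of the branching orientation, and set \(\beta_0=f_j(s)\,ds+b(s,u)\,du\). Since the restriction to \(e_j\) is \(f_j(s)\,ds\) from all three sheets, consistency along \(e_j\) reduces to choosing a common \(f_j\); the term \(b\,du\) is free on each sheet because \(du\) pulls back to \(0\) on \(e_j\). Choosing \(\partial_s b\) with the sign dictated by the branching orientation of each sheet makes \(d\beta_0>0\) on all three. The foliation is transverse to \(e_j\) where \(f_j\neq0\), so I would take \(f_j\) with only simple zeros, each placed in the interior of \(e_j\) away from the vertices, and use the hypothesis \(d\beta_0>0\) to arrange that each such simple tangency has index \(+1\). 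Finally I arrange \(\int_{e_j}f_j\,ds=x_j\), so that the arc of \(\partial R_i'\) running parallel to \(e_j\) contributes \(\varepsilon_{ij}x_j\) to \(\int_{\partial R_i'}\beta_0\).

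The main obstacle is the model near a vertex. Its neighborhood in \(P\) is the cone over the \(1\)-skeleton of a tetrahedron, that is, six sheets glued along the four incident triple lines. Here I must produce a nowhere-zero \(\beta_0\) with \(d\beta_0>0\) on each of the six sheets, transverse to all four triple lines at and near the vertex (S-stability forbids a tangency at a vertex), and restricting on each incident \(e_j\) to the boundary data of the edge model. The delicate point is that the branching orientations around the vertex constrain the admissible signs of \(f_j\) at the endpoints: a positively curved, transverse, consistent extension over the six sheets exists only for certain sign patterns of the \(f_j\) at the vertex. It is precisely the possible clash between these forced endpoint signs and a constant-sign \(f_j\) that forces \(f_j\) to change sign along \(e_j\), and so accounts for the simple tangencies inserted in the edge model (at most two per triple line). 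I would settle this step by exhibiting, for each realizable sign pattern, one explicit nowhere-zero form on the six-sheeted cone and transporting the edge data onto it; the bookkeeping of which patterns are realizable, and the verification that the resulting tangencies are of index \(+1\), is where the branching, S-stability, and the sign of \(d\beta_0\) genuinely interact, and is the hardest part.

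To finish, I would arrange the edge and vertex models to coincide on the annular collars where their domains overlap, so that they patch directly into a global \(\beta_0\) on \(Q\) satisfying~(1) and~(3); the convexity of the open condition \(d\beta_0>0\) gives the room needed to carry out this matching. For~(2), I would split \(\partial R_i'\) into the arcs parallel to the triple lines and the short arcs passing through the vertex neighborhoods. The former contribute \(\sum_{\tilde e_j\subset\partial\bar R_i}\varepsilon_{ij}x_j\), which is positive by admissibility, while the total contribution of the vertex arcs is bounded by a fixed multiple of their length and hence can be made smaller than this positive margin by shrinking the vertex neighborhoods. Therefore \(\int_{\partial R_i'}\beta_0>0\) for every \(i\), completing the construction.
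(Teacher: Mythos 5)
Your overall architecture agrees with the paper's: decompose $Q$ into vertex neighborhoods and edge pieces, use an admissibility solution $(x_1,\ldots,x_m)$ as prescribed fluxes along the triple lines, confine all tangencies to the interiors of edges, and obtain condition~(2) by making the vertex contributions negligible (the paper likewise shrinks $Q$ so that $\int_{\partial R_i'}\beta_0$ is close to $\sum_j\varepsilon_{ij}E_j>0$). But the step you yourself flag as ``the hardest part'' is precisely the mathematical content of the lemma, and your proposal does not carry it out: you neither determine which sign patterns at a vertex are realizable by a nowhere-zero form with $d\beta_0>0$ on the six-sheeted cone, nor exhibit even one such vertex model, nor show that the edge data can be transported onto it. As written, the proof reduces the lemma to an unproved case analysis. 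Two secondary soft spots compound this: (a) ``convexity of the open condition $d\beta_0>0$'' does not by itself justify patching, since an interpolation $(1-\chi)\alpha_1+\chi\alpha_2$ acquires the cross term $d\chi\wedge(\alpha_2-\alpha_1)$, which can destroy positivity; you avoid this only by demanding that the models literally coincide on overlaps, but that coincidence must be built into the missing vertex model. (b) The claim that $d\beta_0>0$ lets you ``arrange'' index $+1$ at the simple zeros of $f_j$ is asserted, not proved; positivity of $d\beta_0$ alone does not force the index, which is why the paper chooses its edge models with care.

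It is worth recording how the paper dissolves exactly the difficulty you ran into, because it is a single idea rather than bookkeeping: every piece of $Q$ is projected (orientation-preservingly on regions) into $\Real^2$ and the one fixed form $r^2d\theta$ is pulled back. Then $d\beta_0>0$ is automatic from $d(r^2d\theta)=2r\,dr\wedge d\theta>0$; near each vertex the projection $\pr_{v_j}$ is chosen so that all four triple-line germs map to counterclockwise arcs inside a small disk far from the origin, hence they are transverse to the radial kernel foliation of $r^2d\theta$ and the restriction of $\beta_0$ to every triple line is positive at every vertex. Consequently there is only one sign pattern to realize (all $+$), and the question of ``realizable sign patterns'' never arises. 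All flux adjustment is then done in the edge pieces, where Stokes' theorem identifies $\int_{e_j}\beta_0$ with a signed area that can be tuned to any prescribed $E_j$, at the cost of exactly two index $+1$ tangencies when $E_j\le 0$ (and none when $E_j>0$), with circle components handled by round or figure-eight images. Supplying this uniform model, or an equivalent explicit vertex construction with a compatibility argument, is what your proposal still needs to become a proof.
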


\begin{proof}
Let $v_1,\ldots,v_{n_v}$ be the vertices of $P$ and $N_{v_j}$ be a small compact neighborhood of $v_j$ in $P$. For each $j=1,\ldots,n_v$, we define a projection $\pr_{v_j}$ from $N_{v_j}$ to $\Real^2$ such that 
\begin{itemize}
\item[(i)] the orientation of each region defined by the branching coincides with that of $\Real^2$,
\item[(ii)] the image is included in a small open disk centered at the point $\left(\cos \frac{2\pi j}{n_v}, \sin\frac{2\pi j}{n_v}\right)$, and
\item[(iii)] the orientations of the images of the triple lines in $N_{v_j}$ are counter-clockwise.
\end{itemize}
See Figure~\ref{fig4}. 
The arrowed edges in the figure are the images of triple lines of $P$, which are oriented 
according to the branching of $P$ as shown in Figure~\ref{fig2}. 

\begin{figure}[htbp]
\begin{center}
\includegraphics[width=8.0cm, bb=137 373 497 712]{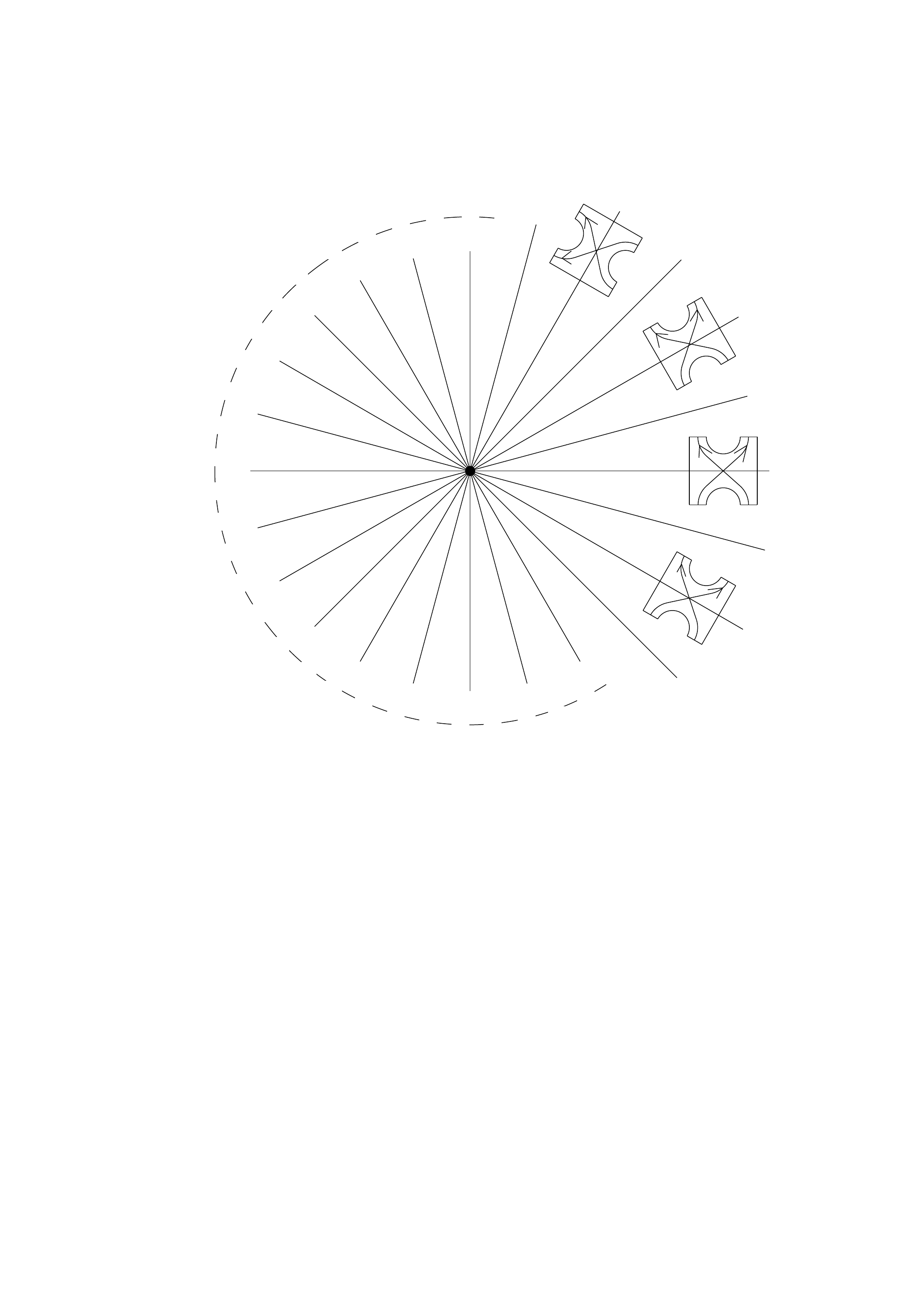}
\caption{The images of $N_{v_1},\ldots,N_{v_{n_v}}$ on $\Real^2$.}\label{fig4}
\end{center}
\end{figure}

Let $(r,\theta)$ be the polar coordinates of $\Real^2$, 
set a $1$-form on $\Real^2$ as $r^2d\theta$ and define the $1$-form $\beta_0$ on $N_{v_j}$ by
$\pr_{v_j}^*(r^2d\theta)$. Note that $d\beta_0>0$ on $N_{v_i}$ since $d(r^2d\theta)=2rdr\land d\theta>0$.

Next, we define the $1$-form $\beta_0$ on a neighborhood of each triple line of $P$.
Let $e_1,\ldots, e_m$ be the triple lines of $P$ and 
$N_{e_j}$, $j=1,\ldots,m$, be a small compact neighborhood of $e_j$ in $P$
and set $N_{e_j}'=N_{e_j}\setminus\Int(N_{v(e_j)}\cup N_{v'(e_j)})$,
where $v(e_j)$ and $v'(e_j)$ are the two vertices at the endpoints of $e_j$.
Since $P$ is assumed to be admissible, we can choose an $m$-tuple of real numbers $(E_1,\ldots,E_m)$ in $C(P)$. 

Suppose that $e_j$ is an edge.
If $E_j>0$, we choose a projection $\pr_{e_j}$ from $N'_{e_j}$ to $\Real^2$ as shown on the left in Figure~\ref{fig5} and define the $1$-form $\beta_0$ on $N'_{e_j}$ by
$\pr_{e_j}^*(r^2d\theta)$ as before. 
Set $e_j'=e_j\cap N'_{e_j}$.
Let $a_j$ and $a'_j$ be the endpoints of $\pr_{e_j}(e_j')$ and let $\ell_{e_j}$ and $\ell_{e_j}'$ be the line segments on $\Real^2$ connecting the origin and $a_j$ and the origin and $a_j'$, respectively.
Since these segments are in the kernel of $r^2d\theta$, 
the integrations of $r^2\theta$ along $\ell_{e_j}$ and $\ell_{e_j}'$ are $0$.
By Stokes' theorem, the absolute value of the integration of $\beta_0$ along $e_j'$ coincides with the area of the region bounded by $\pr_{e_j}(e'_j)$, $\ell_{e_j}$ and $\ell_{e_j}'$.
We may choose $\pr_{e_j}$ so that the area becomes the given real number $E_j>0$, which means that the integration of $\beta_0$ along $e'_j$ coincides with $E_j$.
Note that the foliation defined by $\beta_0=0$ has no simple tangency point on $N_{e_j}'$.

\begin{figure}[htbp]
\includegraphics[width=125mm, bb=129 594 470 711]{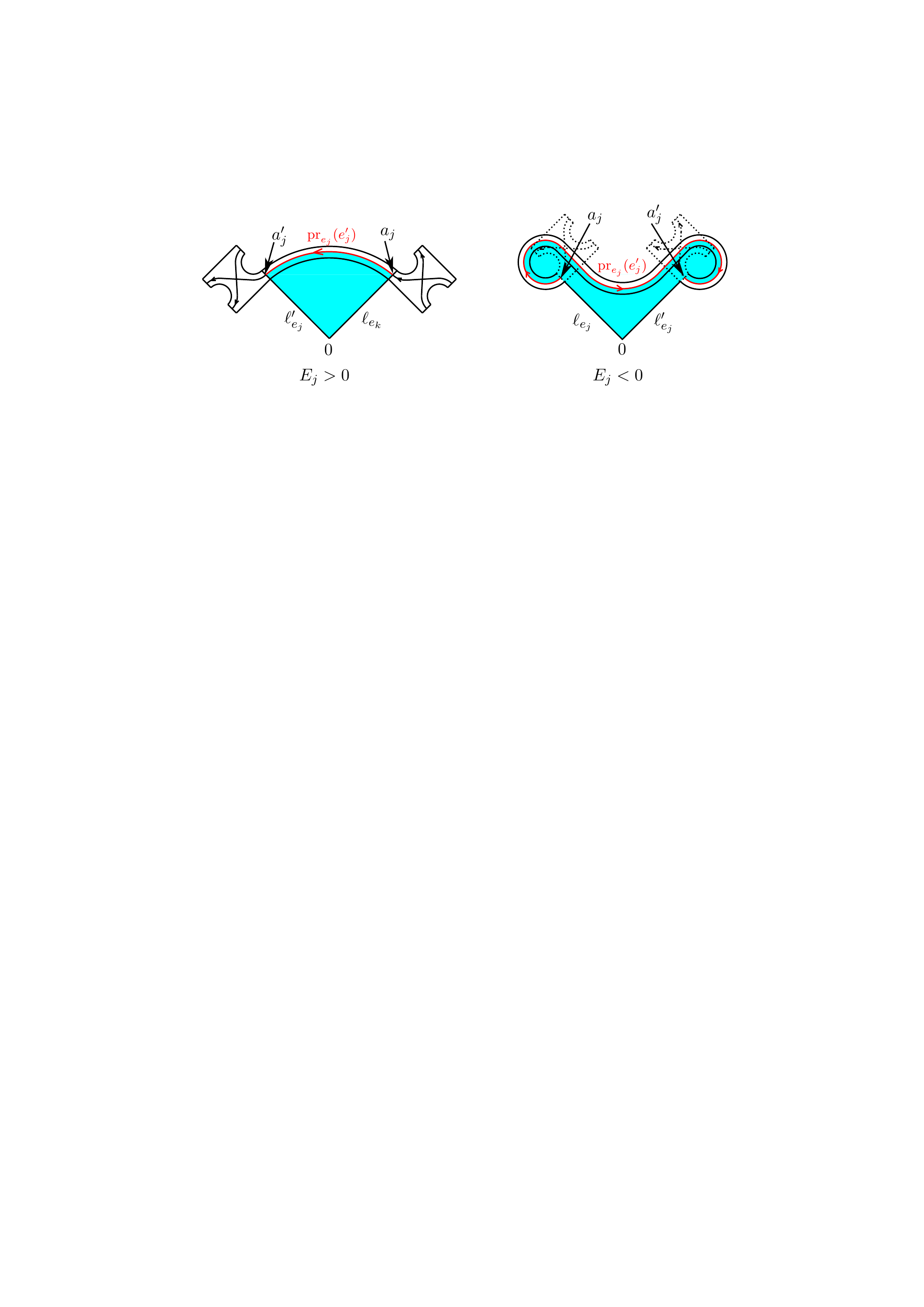}
\caption{The images of $N'_{e_j}$ on $\Real^2$ in the case $E_j>0$ and $E_j<0$.}\label{fig5}
\end{figure}

If $E_j<0$, we choose a projection $\pr_{e_j}$ from $N'_{e_j}$ to $\Real$ as shown on the right in Figure~\ref{fig5} and define the $1$-form $\beta_0$ on $N'_{e_j}$ by $\pr_{e_j}^*(r^2d\theta)$ as before. By the same observation as in the case $E_j>0$, we may choose $\pr_{e_j}$ so that the integration of $\beta_0$ along $e'_j$ coincides with $E_i$.
For each edge $e_j$ with $E_j<0$, the foliation defined by $\beta_0=0$ has exactly 
two simple tangency points and their indices are $+1$.

If $E_j=0$, we choose the projection $\pr_{e_j}$ as shown in Figure~\ref{fig6}. 
In the figure, $\pr_{e_j}$ is chosen in such a way that the area of the region $D_{e_j}^+$ bounded by $\pr_{e_j}(e'_j)$, $\ell_{e_j}$ and $\ell_{e_j}'$ is equal to the area of the region $D_{e_j}^-$ bounded by $\pr_{e_j}(e_j')$.
Then we have
\[
   E_j=\int_{\pr_{e_j}(e'_j)}r^2d\theta=\int_{D_{e_j}^+}2rdr\land d\theta
-\int_{D_{e_j}^-}2rdr\land d\theta=0.
\]
Two simple tangency points of index $+1$ appear in this case.

\begin{figure}[htbp]
\begin{center}
\includegraphics[width=6cm, bb=171 522 387 711]{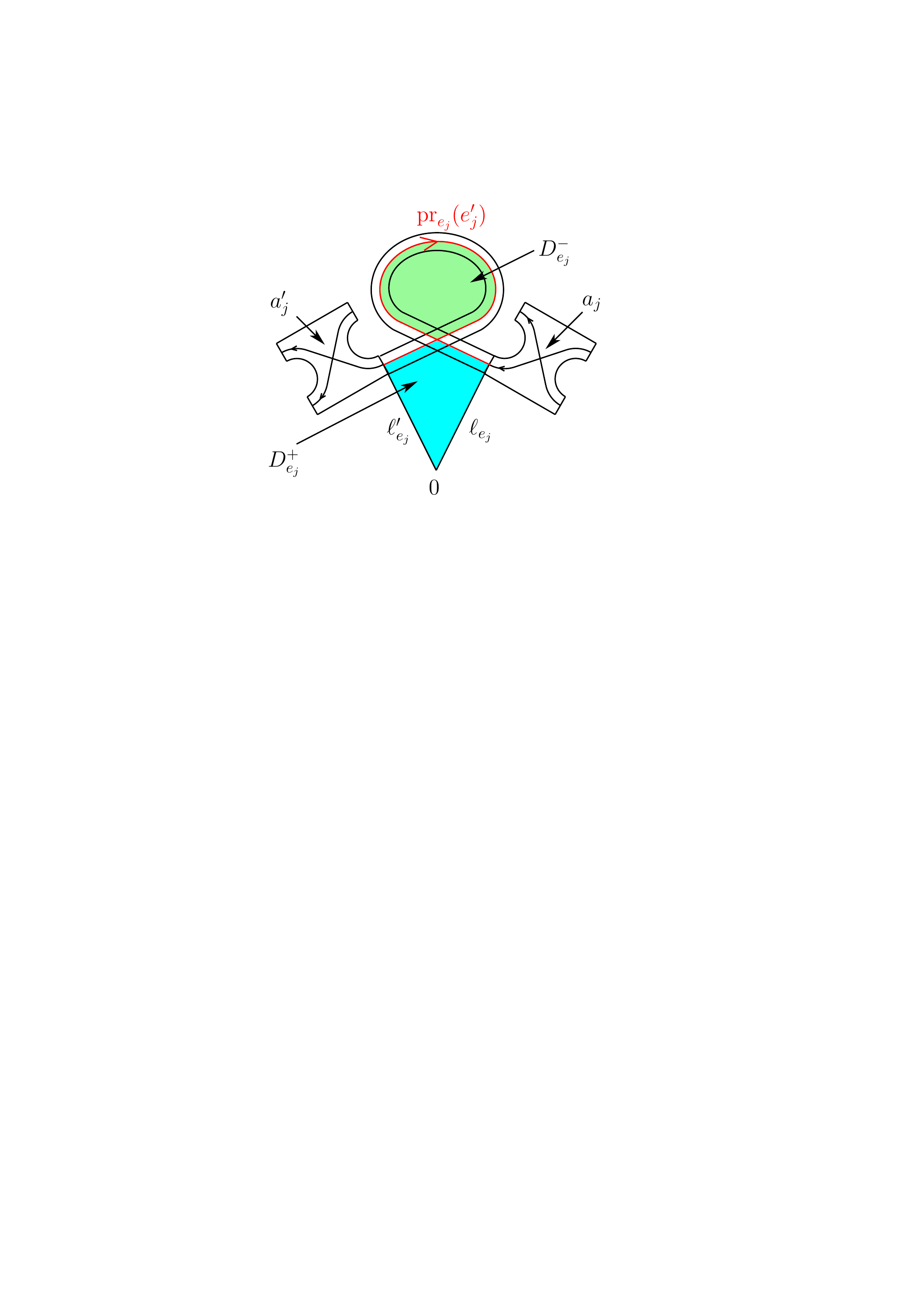}
\caption{The image of $N'_{e_j}$ on $\Real^2$ in the case $E_j=0$.}\label{fig6}
\end{center}
\end{figure}

If $e_j$ is a circle and $E_j\ne 0$ then we embed $N_{e_j}$ into
$\Real^2$ along a circle centered at the origin and define $\beta_0$ by 
$\pr_{e_j}^*(r^2d\theta)$ as before.
We choose the embedding such that the orientation of the triple line in the image is counter-clockwise if $E_j>0$ and clockwise if $E_j<0$.
Choosing the radius of the circle suitably, we have $\int_{e_j}\pr_{e_j}^*(r^2d\theta)=E_j$. 
It has no simple tangency point.

If $e_j$ is a circle and $E_j=0$ then we embed $N_{e_j}$ into $\Real^2$ along an ``$8$''-shaped immersed curve so that the origin is in the middle of the region whose boundary is oriented counter-clockwise. 
Using the same trick as in Figure~\ref{fig6}, we may have an embedding of $N_{e_j}$ such that
$\int_{e_j}\pr_{e_j}^*(r^2d\theta)=0$. Note that the foliation defined by $r^2d\theta=0$ has two simple tangency points of index $+1$.

Finally, we check if the $1$-form $\beta_0$ obtained above satisfies the required conditions.
It satisfies the condition~(1) by the construction.
The condition~(2) is satisfied by choosing $Q$ to be sufficiently narrow so that
$\int_{\partial R_i'}\beta_0$ is sufficiently close to 
$\sum_{j=1}^m \varepsilon_{ij}E_j$, where $\varepsilon_{ij}$ is the coefficient in inequality~\eqref{eq_adm}.
Since $(E_1,\ldots,E_m)\in C(P)$, the sum $\sum_{j=1}^m \varepsilon_{ij}E_j$ is positive. Hence
$\int_{\partial R_i'}\beta_0$ is also positive.
The condition~(3) is obviously satisfied since $d(r^2d\theta)>0$.
This completes the proof.
\end{proof}

Now we extend the $1$-form $\beta_0$ on $Q$ to $P$ by applying the following lemma used in the paper of Thurston and Winkelnkemper~\cite{TW75}.
For a compact surface $\Sigma$ with boundary $\partial \Sigma$, let $\Nbd(\partial \Sigma;\Sigma)$ denote
a small compact neighborhood of $\partial \Sigma$ in $\Sigma$.

\begin{lemma}[Thurston-Winkelnkemper]\label{lemma_TW}
Let $\Sigma$ be a compact, oriented surface with boundary and $\beta_0$ be a $1$-form on 
$\Nbd(\partial\Sigma;\Sigma)$ such that $\int_{\partial\Sigma}\beta_0>0$ and $d\beta_0>0$. Then there exists a $1$-form $\beta$ on $\Sigma$ such that
\begin{itemize}
\item $\beta=\beta_0$ on $\Nbd(\partial\Sigma;\Sigma)$, and
\item $d\beta>0$ on $\Sigma$.
\end{itemize}
\end{lemma}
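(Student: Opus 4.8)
The plan is to build the $1$-form $\beta$ explicitly by interpolating between the given boundary data $\beta_0$ and a reference $1$-form with positive $d$ that vanishes on the boundary in the needed sense, using a partition of unity argument anchored by a collar coordinate near $\partial\Sigma$. First I would fix a collar neighborhood $\Nbd(\partial\Sigma;\Sigma)\cong \partial\Sigma\times[0,1]$ with coordinate $t\in[0,1]$, where $t=0$ corresponds to $\partial\Sigma$, arranged so that $\beta_0$ is defined on, say, $\partial\Sigma\times[0,\epsilon]$. The key structural input is the hypothesis $\int_{\partial\Sigma}\beta_0>0$: this is the only obstruction to filling in, since for any area form one needs the total boundary integral to match the total enclosed "area," and positivity gives us room to do so.

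The central construction proceeds in two pieces. On the collar I would take $\beta$ to agree with $\beta_0$ near $t=0$ and then interpolate to a form of the shape $f(t)\,\gamma$, where $\gamma$ is a fixed $1$-form on $\Sigma$ that restricts on each boundary component to a positive multiple of the oriented generator (so that $\int_{\partial\Sigma}\gamma>0$) and $f$ is a cutoff with $f(0)=0$, $f(1)=1$. The interpolation must be done so that $d\beta>0$ throughout the collar; this is where the sign of $\int_{\partial\Sigma}\beta_0$ enters, because Stokes' theorem forces the total variation of the transverse component across the collar to equal a positive number, which can always be spread out monotonically to keep $d\beta$ positive. On the interior $\Sigma\setminus\Nbd(\partial\Sigma;\Sigma)$, which is a compact surface with boundary on which $\beta$ is already prescribed near its boundary by the collar model $f(t)\gamma$, I would fill in using that the prescribed boundary integral is again positive by construction and applying the standard fact that on a compact oriented surface one can find a $1$-form with prescribed positive-area boundary behavior and $d\beta>0$ everywhere; concretely one takes $\beta = \beta_1 + dg$ where $\beta_1$ is any extension and $g$ solves a suitable equation arranging positivity of the resulting exact correction, or more simply one glues in a standard model on an abstract disk decomposition.

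The step I expect to be the main obstacle is verifying $d\beta>0$ uniformly across the transition region of the collar while simultaneously matching $\beta_0$ exactly near $t=0$. The tension is that $\beta_0$ is arbitrary subject only to $d\beta_0>0$ and positive boundary integral, so one cannot simply linearly interpolate $1$-form to $1$-form and expect the differential to stay positive; instead one must interpolate at the level of the pair $(\beta, d\beta)$, ensuring the area form $d\beta$ never degenerates. The clean way to handle this is to write everything in the collar coordinates as $\beta = u(t,\cdot)\,ds + v(t,\cdot)\,dt$ (with $s$ the coordinate along $\partial\Sigma$), compute $d\beta = (\partial_t u - \partial_s v)\,dt\wedge ds$, and choose the interpolation of the coefficient functions so that $\partial_t u - \partial_s v$ stays of one sign; the positivity of $\int_{\partial\Sigma}\beta_0 = \int u(0,s)\,ds$ guarantees enough ``flux'' is available to carry out this monotone interpolation without creating a zero of the area form.

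\begin{remark}
Since this lemma is quoted directly from Thurston--Winkelnkemper~\cite{TW75}, in the paper one would typically only recall the statement and cite the original argument rather than reproduce the full collar-interpolation construction; the sketch above records the mechanism for completeness.
\end{remark}
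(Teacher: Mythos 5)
The paper itself does not prove this lemma; it is quoted from Thurston--Winkelnkemper \cite{TW75} and used as a black box, so your closing remark correctly anticipates the paper's treatment. Judged on its own merits, however, your sketch contains one fatal error and defers its real difficulty. The error is in the interior-filling step: you propose $\beta=\beta_1+dg$ with $\beta_1$ an arbitrary extension and $g$ ``solving a suitable equation arranging positivity of the resulting exact correction.'' No choice of $g$ can do this, because $d(\beta_1+dg)=d\beta_1$: adding an exact $1$-form never changes $d\beta$, so an exact correction cannot repair non-positivity anywhere. The correction must be a non-exact $1$-form with prescribed (nonzero) differential. The standard argument runs as follows: shrink the collar so that $\int_{\Nbd(\partial\Sigma;\Sigma)}d\beta_0<\int_{\partial\Sigma}\beta_0$; choose an area form $\omega$ on $\Sigma$ with $\omega=d\beta_0$ near $\partial\Sigma$ and $\int_\Sigma\omega=\int_{\partial\Sigma}\beta_0$ (possible precisely because the deficit is positive and can be distributed over the interior); take any extension $\beta_1$ of $\beta_0$. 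Then $\omega-d\beta_1$ is supported in the interior and has total integral zero by Stokes' theorem, hence $\omega-d\beta_1=d\gamma$ for some $1$-form $\gamma$ compactly supported in the interior, since top-degree compactly supported cohomology of a connected oriented surface is detected by integration (apply this componentwise, using positivity of the boundary integral on each component). Setting $\beta=\beta_1+\gamma$ gives $d\beta=\omega>0$ and $\beta=\beta_0$ near $\partial\Sigma$. Note that $\gamma$ is exactly the object your $dg$ cannot be: it satisfies $d\gamma\neq 0$.

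The other step, normalizing $\beta$ to $f(t)\gamma$ across the collar, is both unnecessary and the place where you park the genuine analytic content: you assert that positivity of $\int_{\partial\Sigma}\beta_0$ provides enough ``flux'' to interpolate the coefficients of $u\,ds+v\,dt$ monotonically, but this is never constructed, and pointwise it is essentially as hard as the original extension problem (an arbitrary $\beta_0$ with $d\beta_0>0$ can have $u(0,\cdot)$ wildly sign-changing, and one must also handle each boundary component of the collar separately, whereas the hypothesis only controls the total boundary integral). The cohomological argument above shows this normalization can be skipped entirely: keep $\beta_0$ as is on the collar and do all the work in the interior.
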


\begin{proof}[Proof of Proposition~\ref{thm_adm}]
Suppose that a branched simple polyhedron $P$ is admissible. 
The conditions~(2) and~(3) for $\beta_0$ in Lemma~\ref{lemma31} allow us to use Lemma~\ref{lemma_TW} for each region $R_i$ of $P$. Then the $1$-form $\beta$ obtained by this lemma satisfies the required conditions in the assertion.

Conversely, if there exists a $1$-form $\beta$ that satisfies the required conditions then 
the $m$-tuple $\left(\int_{e_1}\beta,\ldots,\int_{e_m}\beta\right)$ is an element in $C(P)$ by Stokes' theorem.
Therefore $P$ is admissible. This completes the proof.
\end{proof}

\begin{proof}[Proof of Theorem~\ref{thm1}]
As mentioned in the end of the above proof, 
if there exists a $1$-form $\beta$ on $P$ with $d\beta>0$ then $P$ is admissible.
Thus Theorem~\ref{thm1} follows from Proposition~\ref{thm_adm}.
\end{proof}

\begin{remark}
The number of simple tangency points of the foliation defined by $\beta$ constructed in the above proof is at most $2m$, where $m$ is the number of triple lines of $P$.
\end{remark}

\section{Proof of Theorem~\ref{thm2}}\label{sec4}

In this section, we prove Theorem~\ref{thm2}

\begin{lemma}\label{lemma33}
Let $P$ be a branched simple polyhedron. If $P$ has a preferred region then
\[
   \{(x_1,\ldots,x_m)\in C(P)\mid x_i>0,\;\,i=1,\ldots,m\}
\]
is empty.
\end{lemma}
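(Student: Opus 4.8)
The plan is to unwind the definition of a preferred region in terms of the coefficients $\varepsilon_{ij}$ appearing in the admissibility inequality~\eqref{eq_adm}, and then to observe that the single inequality associated with this region already fails for any tuple with all positive entries. I expect the whole argument to be short and essentially a matter of matching sign conventions.

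First I would fix a preferred region, say $R_{i_0}$. By definition, the orientation of every triple line $e_j$ meeting the boundary $\partial\bar R_{i_0}$ is opposite to the orientation of $\kappa_{i_0}(\tilde e_j)$ induced from the orientation of $R_{i_0}$. Comparing this directly with the definition of $\varepsilon_{ij}$, this says precisely that $\varepsilon_{i_0 j}=-1$ for every arc or circle $\tilde e_j\subset\partial\bar R_{i_0}$. Here I should note that a single triple line $e_j$ may give rise to several components $\tilde e_j$ on $\partial\bar R_{i_0}$; but each such component carries the coefficient $-1$, so the repetition causes no difficulty.

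Next I would write down inequality~\eqref{eq_adm} for the index $i_0$ and argue by contradiction. Suppose there is a tuple $(x_1,\ldots,x_m)\in C(P)$ with $x_j>0$ for all $j$. Substituting $\varepsilon_{i_0 j}=-1$ gives
\[
   0 < \sum_{\tilde e_j\subset\partial\bar R_{i_0}}\varepsilon_{i_0 j}\,x_j
     = -\sum_{\tilde e_j\subset\partial\bar R_{i_0}} x_j .
\]
Since $R_{i_0}$ is a region of a branched simple polyhedron with nonempty singular set, its boundary $\partial\bar R_{i_0}$ contains at least one arc or circle mapping onto a triple line, so the right-hand sum ranges over a nonempty collection of strictly positive terms. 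Hence the right-hand side is strictly negative, contradicting the displayed inequality. This forces the set in the statement to be empty.

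The argument carries no genuine obstacle; the only points requiring care are the orientation bookkeeping, namely confirming that the ``opposite orientation'' in the definition of a preferred region is exactly the case $\varepsilon_{i_0 j}=-1$, and verifying that the boundary of the preferred region is nonempty so that the sum is taken over a nonempty index set and is therefore strictly negative rather than vacuously zero.
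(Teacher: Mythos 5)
Your proof is correct and follows essentially the same route as the paper's: both fix the preferred region, observe that the definition forces every coefficient $\varepsilon_{i_0 j}$ in inequality~\eqref{eq_adm} to be $-1$, and derive the contradiction $0<-\sum x_j<0$ for a tuple with all positive entries. Your extra remark on the nonemptiness of the boundary sum is a reasonable point of care the paper leaves implicit (and in the degenerate case of a region with no triple lines on its boundary, $C(P)$ is empty outright, so the lemma holds vacuously).
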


\begin{proof}
Let $\bar R$ be the metric completion of a preferred region $R$
and $\kappa:\bar R\to P$ be the natural extension of the inclusion $R\to P$.
Let $\{l_1,\cdots,l_s\}$ be the set of edges and circles on  the boundary $\partial \bar R$ of $\bar R$ such that $\kappa(l_k)$ is a triple line of $P$. 

Assume that there exists a point $(E_1,\ldots,E_m)$ in $C(P)$ with $E_i>0$ for $i=1,\ldots,m$.
For $k=1,\ldots,s$, let $L_k$ be the real number $E_{j_k}$ assigned to the triple line $e_{j_k}$ of $P$ with $\kappa(l_k)=e_{j_k}$. 
Since the orientation of $l_k$ is opposite to the orientation induced from that of $R$, the sum of 
the assigned real numbers along $\partial \bar R$ becomes $-L_1-\cdots -L_s<0$.
 This contradicts the assumption that $(E_1,\ldots,E_m)\in C(P)$.
\end{proof}

Let $\mathcal F$ be an S-stable foliation on a branched simple polyhedron $P$ defined by a $1$-form $\beta$ on $P$.
Since $\mathcal F$ is S-stable, it is transverse to $S(P)$ in a small neighborhood of a vertex of $P$.

\begin{lemma}\label{lemma_even}
Let $P$ be a branched simple polyhedron and $\mathcal F$ be an S-stable foliation on $P$ defined by a $1$-form $\beta$.
Then the number of simple tangency points of $\mathcal F$ is even.
\end{lemma}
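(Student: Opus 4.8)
The plan is to reduce the count of simple tangency points to a count of zeros of a $1$-form on a disjoint union of circles, where the parity is automatic. Recall from Section~\ref{sec2} that $S(P)$ is the image of an immersion $\iota\colon \widetilde S\to P$ of a finite disjoint union of circles $\widetilde S=\bigsqcup_\alpha C_\alpha$, whose only multiple points are the vertices of $P$, each of which is a transverse double point (normal crossing) through which two strands of $\iota$ pass. Away from the vertices the immersion is injective, so each triple line of $P$ is the image of a single open arc of $\widetilde S$, and every point of a triple line other than a vertex has exactly one preimage in $\widetilde S$.

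First I would pull back $\beta$ by $\iota$ to obtain a $1$-form $\iota^*\beta$ on $\widetilde S$, which restricts to a smooth $1$-form on each circle $C_\alpha$. The key observation is that a point $p$ of a triple line $e$ is a tangency point of $\mathcal F$ to $S(P)$ exactly when the tangent line $T_p e$ lies in $\ker\beta_p$, that is, when $\iota^*\beta$ vanishes at the corresponding point of $\widetilde S$; moreover a \emph{simple} tangency point corresponds to a \emph{simple} (transverse) zero of $\iota^*\beta$. Since $\mathcal F$ is S-stable, it is transverse to $S(P)$ near every vertex, so $\iota^*\beta$ is nonzero at every preimage of a vertex; hence all of its zeros lie in the interiors of the arcs and are genuine simple tangency points. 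By the remark above, each simple tangency point has exactly one preimage in $\widetilde S$, so the total number of simple tangency points of $\mathcal F$ equals the total number of zeros of $\iota^*\beta$ on $\widetilde S$.

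It then remains to count zeros circle by circle. On a single circle $C_\alpha$ write $\iota^*\beta = g\,dt$ for a coordinate $t$; the simple zeros of $\iota^*\beta$ are the simple zeros of the periodic function $g$, at each of which $g$ changes sign. Traversing $C_\alpha$ once returns $g$ to its initial value, so the sign of $g$ must change an even number of times, i.e. $\iota^*\beta$ has an even number of zeros on $C_\alpha$. Summing over the finitely many circles $C_\alpha$ shows that the total number of zeros of $\iota^*\beta$, and hence the number of simple tangency points of $\mathcal F$, is even. The point requiring the most care is the bookkeeping in the correspondence between tangency points and zeros: one must confirm that the normal-crossing structure makes each triple line the image of a single arc, so that no tangency point is counted with multiplicity, and that the transversality-at-vertices clause of S-stability keeps every zero away from the double points, where the local picture would otherwise be ambiguous.
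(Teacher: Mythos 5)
Your proof is correct and takes essentially the same approach as the paper: the paper's one-sentence proof invokes exactly the two facts you elaborate, namely that $S(P)$ is the image of an immersion of finitely many circles and that $\mathcal F$ is co-oriented by $\beta$. Your pullback $\iota^*\beta$ and the sign-change count of $g$ around each circle is precisely the intended expansion of that remark, with the vertex-avoidance issue handled correctly via S-stability.
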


\begin{proof}
The assertion follows from the fact that $S(P)$ consists of the image of an immersion of a finite number of circles and $\mathcal F$ is co-oriented.
\end{proof}

Since $\mathcal F$ is transverse to $S(P)$ in small neighborhoods of vertices, there are four kinds of projections as shown in Figure~\ref{fig7}. In the figure, type $1$ is the case where both of the oriented edges of $S(P)$ intersect the leaves of $\mathcal F$ in the same direction and type~$2$ is the case where
they intersect the leaves in opposite directions. The arrows with symbol $\beta$ represent the direction along which the integration of $\beta$ becomes positive.
The sign of type is $+$ if the direction of the arrow coincides with the orientation of the edge passing from the left-bottom to the right-top and the sign is $-$ otherwise. Each sign $+$ or $-$ written near the endpoints of the 
edges of $S(P)$ represents the sign of the value obtained by inserting a vector tangent to the edge
whose direction is consistent with the orientation of the edge into the $1$-form $\beta$.
%The painted region in each figure is 
In each figure, only the painted region can be a part of a preferred region 
in both of $\ell$-type and $r$-type vertex cases. 
We call the small neighborhoods of vertices in Figure~\ref{fig7} 
with labels $1_+$, $1_-$, $2_+$ and $2_-$
the {\it H-pieces of type $1_+$, $1_-$, $2_+$ and $2_-$}, respectively.

\begin{figure}[htbp]
\begin{center}
\includegraphics[width=10.0cm, bb=129 610 491 709]{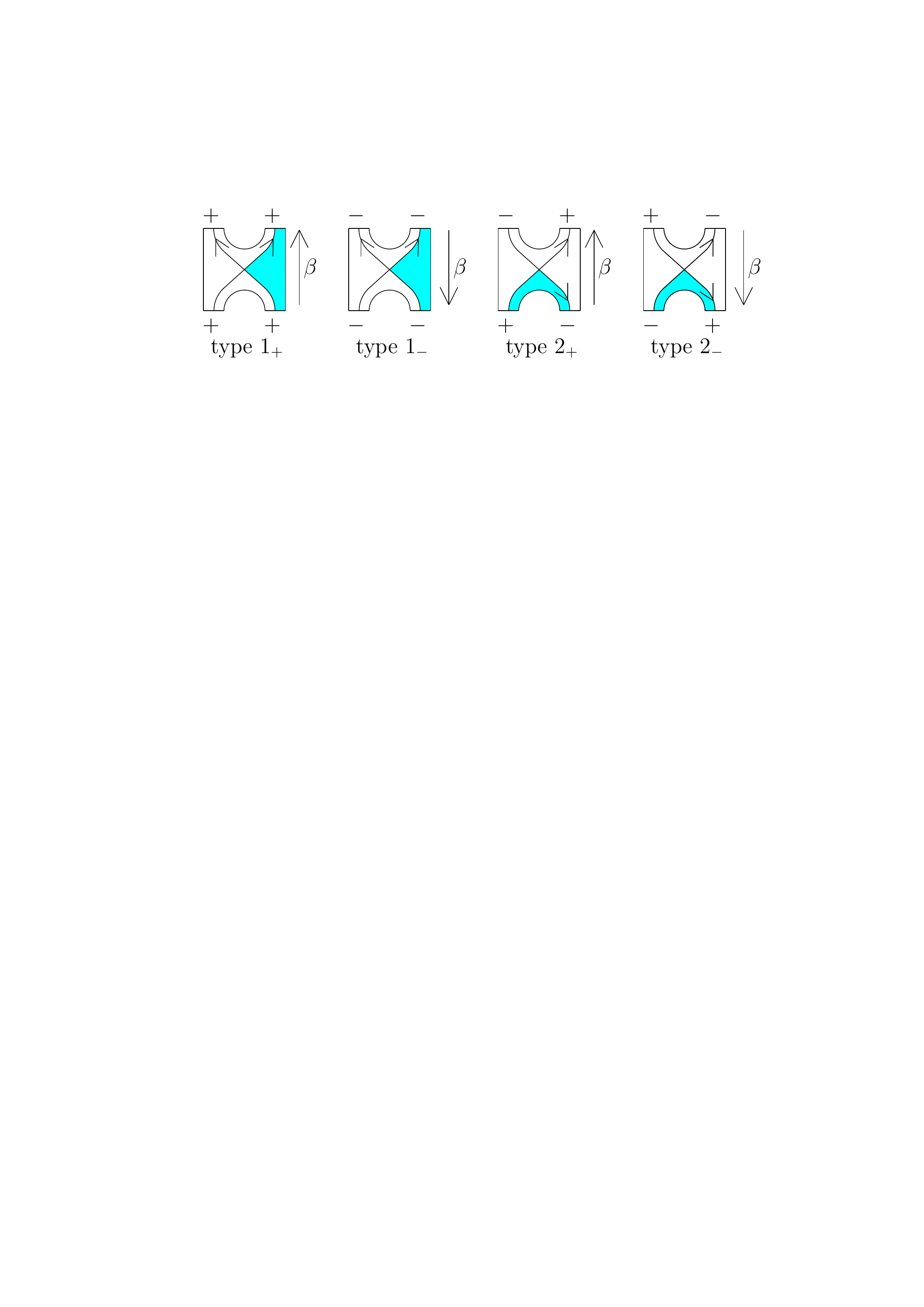}
\caption{H-pieces.}\label{fig7}
\end{center}
\end{figure}

\begin{lemma}\label{lemma41}
Let $P$ be a flow-spine of a closed, oriented, smooth $3$-manifold
and $\mathcal F$ be an S-stable foliation on $P$ defined by a $1$-form.
If there exists a vertex of $P$ whose H-piece is of type~$2$ then $\mathcal F$ has a simple tangency point.
\end{lemma}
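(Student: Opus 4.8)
The plan is to exploit the defining feature of a flow-spine, namely that $S(P)$ is the image of an immersion of a \emph{single} circle, so that sign information is forced to propagate along one connected curve. Write $\gamma\colon S^1\to P$ for this immersion, oriented so that it agrees with the orientation of the triple lines induced by the branching. The structural input I would first make precise from the vertex models in Figure~\ref{fig2} is that the branching orients $S(P)$ coherently as an immersed circle, i.e.\ the two strands through each normal crossing are coherently oriented, so that a single orientation of $\gamma$ simultaneously realises the branching orientation of every edge. I would then define the smooth function $g\colon S^1\to\Real$ by $g(t)=\beta(\gamma'(t))$. This is well defined: $\beta$ is a $1$-form on all of $P$, and the three sheets meeting along a triple line share its tangent line, so the value $\beta(\gamma'(t))$ does not depend on the sheet. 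By definition, a point of $S(P)$ is a tangency point of $\mathcal F$ to $S(P)$ exactly when the leaf through it is tangent to $S(P)$, that is, exactly when $g$ vanishes there.

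Next I would translate the H-piece data into sign data for $g$. Since $\mathcal F$ is S-stable it is transverse to $S(P)$ near every vertex, so $g$ is nonzero at the two parameters $t_1,t_2\in S^1$ at which $\gamma$ passes through a given vertex (the two strands of the normal crossing). With the orientation convention above, the sign of $g(t_i)$ records the direction in which the $i$-th oriented strand crosses the leaves of $\mathcal F$, which is precisely the datum recorded by the arrows labelled $\beta$ in Figure~\ref{fig7}. Hence a vertex is of type~$1$ exactly when $g(t_1)$ and $g(t_2)$ have the same sign, and of type~$2$ exactly when they have opposite signs.

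Now suppose some vertex $v$ is of type~$2$, with $\gamma$ passing through $v$ at $t_1$ and $t_2$. Then $g(t_1)g(t_2)<0$. Since the domain $S^1$ is connected, the intermediate value theorem applied to $g$ along an arc of $S^1$ joining $t_1$ to $t_2$ produces a parameter $t_*$ with $g(t_*)=0$; as $g$ does not vanish on the neighborhoods of the vertices, $t_*$ lies in the interior of an edge. The point $\gamma(t_*)$ is therefore a tangency point of $\mathcal F$ to $S(P)$ away from the vertices, and by S-stability it is a simple tangency point. This proves the lemma.

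The step I expect to require the most care is the bookkeeping in the second paragraph. One must pin down, from the branched vertex models, that the branching orientation is coherent along each strand of a crossing, so that the single orientation of $\gamma$ is compatible with the branching orientation of both strands through every vertex; and one must then verify that ``same/opposite crossing direction'' in the definition of type~$1$/type~$2$ matches ``same/opposite sign of $g$'' under this convention, matching the $\pm$ labels near the endpoints in Figure~\ref{fig7}. Once the orientation conventions are aligned, the connectivity of $S(P)$ for a flow-spine together with the intermediate value theorem do the rest.
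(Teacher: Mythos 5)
Your proof is correct and rests on essentially the same mechanism as the paper's: for a flow-spine $S(P)$ is the image of an immersion of a single circle, and the sign discrepancy of $\beta$ at the two strands of a type-$2$ vertex forces a sign change, hence a zero, of $\beta$ evaluated along that one circuit, which is a (necessarily simple, by S-stability) tangency point. The paper packages this as a contradiction --- no tangency point would force a constant sign on each circuit, so a type-$2$ vertex would require at least two circuits, contradicting the flow-spine hypothesis --- whereas you apply the intermediate value theorem directly along the single circuit; the two phrasings are interchangeable.
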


Recall that $S(P)$ consists of the image of an immersion of a finite number of circles.
In the following proofs, a {\it circuit} means the image of an immersion of one of these circles.
If a circuit passes all 
%edges 
triple lines then it is called an {\it Euler circuit}.
A branched simple spine is a flow-spine if and only if it has an Euler circuit.

\begin{proof}
%In the following proof, a circuit means a circuit on $S(P)$ that is obtained as explained in the beginning of this section.
Assume that there exists a vertex of $P$ whose H-piece is of type $2$ and $\mathcal F$ has no tangency point. The latter condition implies that if two edges of $S(P)$ are connected in a circuit then the integrations of $\beta$ along these 
edges are either both positive or both negative. Since there is an H-piece of type $2$, $S(P)$ has at least two circuits. This contradicts the assumption that $P$ is a flow-spine.
\end{proof}

\begin{proof}[Proof of Theorem~\ref{thm2}]
Assume that there exists an S-stable foliation $\mathcal F$ defined by a $1$-form $\beta$ with $d\beta>0$ and without simple tangency points. 
If $P$ has no vertex then it does not admit a $1$-form $\beta$ with $d\beta>0$. Suppose $P$ has at least one vertex. 
By Lemma~\ref{lemma41}, the H-pieces of all the vertices are of type $1$.
Furthermore, all vertices must have the same sign, otherwise there exists an edge of $P$
which connects two vertices with opposite signs. 
If all vertices are of type $1_+$ then $\int_{e_j}\beta>0$ for all edges $e_1,\ldots,e_m$.
Since $P$ has a preferred region, this is impossible by Lemma~\ref{lemma33}.
If all vertices are of type $1_-$ then $\int_{e_j}\beta<0$ for all edges $e_1,\ldots,e_m$.
Now we take the sum of the integrations of $\beta$ along the boundaries of all the regions of $P$.
In this calculation, $\int_{e_j}\beta$ appears twice and $\int_{-e_j}\beta$ once for each 
$j=1,\ldots,m$.  Since we have $\int_{e_j}\beta<0$ for all $j=1,\ldots,m$,
the sum of the integrations is negative. This contradicts the assumption that $d\beta>0$ on $P$.
\end{proof}

\section{DS-diagram and Proof of Corollary~\ref{cor1}}\label{sec5}

To explain the proof of Corollary~\ref{cor1}, we first shortly introduce the DS-diagram of a flow-spine,
see~\cite{Ish92, IIKN20} for details (cf.~\cite{II85}). Let $P$ be a flow-spine of a closed, oriented, smooth $3$-manifold $M$. The complement $M\setminus P$ is an open ball.
The singular set $S(P)$ of $P$ induces a trivalent graph on the boundary of the closed ball $B^3$ obtained as the geometric completion of $M\setminus P$, which is called the {\it DS-diagram} of $P$. 
The flow of $P$ induces a flow on $B^3$, and 
the set of points on the boundary $\partial B^3$ at which the flow is tangent to $\partial B^3$ constitutes a simple closed curve. 
This curve is called the {\it E-cycle}. 
The E-cycle separates $\partial B^3$ into two open disks $S^+$ and $S^-$, where the flow is positively transverse to $S^+$ and negatively transverse to $S^-$.
Here the orientation of $\partial B^3$ is induced from that of $B^3$, and
those of $S^+$ and $S^-$ are induced by the inclusion $S^+, S^-\subset \partial B$.
The E-cycle is oriented as the boundary of $S^+$. 
Observing the geometric completion of $B^3$ in $M$, we may see that each region of $P$ corresponds to a region on $S^+$ bounded by the DS-diagram and each triple line of $P$ corresponds to an edge or circle in $S^+$. The same is true for $S^-$. 
Each triple line of $P$ also corresponds to an edge or circle in the E-cycle. 
The orientation of $S^+$ (resp. $S^-$) is consistent with (resp. opposite to)  the orientations of the regions of $P$.
We assign orientations to the edges and circles of the DS-diagram so that they coincide
with the orientations of the triple lines of $P$. 

We call a region on $S^+$ not adjacent to the E-cycle an {\it internal region}. 
If a DS-diagram has a circle component then the DS-diagram consists of three parallel circles one of which is the E-cycle, one is in $S^+$ and the last one is in $S^-$. The $3$-manifold of the flow-spine given by this DS-diagram is $S^1\times S^2$. See~\cite[Remark 1.2]{EI05} (cf.~\cite[Example 3]{IIKN20}).

\begin{lemma}\label{lemma_disk}
The DS-diagram of a flow-spine has an internal region on $S^+$ homeomorphic to a disk.
\end{lemma}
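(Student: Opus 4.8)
The plan is to realize the regions on $S^+$ as the faces of a graph drawn in a disk and to locate a disk face disjoint from the boundary by an innermost-disk argument, with the existence of at least one such internal face forced by the global fact that $M\setminus P$ is a ball. Write $D=\overline{S^+}$ for the closed disk whose interior is $S^+$ and whose boundary circle is the E-cycle $E$, and let $\hat G$ be the graph on $D$ consisting of the part of the DS-diagram lying in $S^+$ together with $E$. By the correspondence recalled above, the faces of $\hat G$ in $D$ are exactly the regions on $S^+$, and a face is an internal region precisely when its closure is disjoint from $E$. So the goal is an internal face that is a disk.

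First I would reduce the statement to the mere existence of \emph{one} internal region. Suppose some face $F$ is disjoint from $E$. Each boundary circle of $F$ lies in $\Int D$ and hence bounds a subdisk of $D$ disjoint from $E$; choosing such a subdisk $\Delta$ to be innermost, minimality forces $\Int\Delta\cap\hat G=\varnothing$, so $\Delta$ is a single face, a disk, and disjoint from $E$. Thus it suffices to produce one internal region.

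To produce an internal region I would argue by contradiction, assuming every face of $\hat G$ meets $E$. Then no connected component of the interior graph $G=\hat G\setminus E$ can be disjoint from $E$: such a component has no free ends (the singular set of a spine has no valence-one points), so it would contain a cycle and hence bound an internal face. Hence $\hat G$ is connected, and for the same reason $G$ contains no cycle, i.e.\ $G$ is a forest attached to $E$. Then every face is a disk meeting $E$, so $\hat G$ gives a genuine cell structure on $D$ and $\chi(D)=V-E+F=1$ yields $F=1+(E-V)=b_1(\hat G)=1$, where the last equality uses that the only cycle of $\hat G$ is $E$. Thus $P$ would have a single region. This contradicts $\chi(P)=1$: since $M\setminus\Int B$ collapses to $P$, we get $\chi(P)=\chi(M)-\chi(B)+\chi(S^2)=0-1+2=1$, and together with the $4$-valence of $S(P)$ this forces more than one region as soon as $P$ has a true vertex. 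The remaining case, where $P$ has no true vertex, is precisely the case in which the DS-diagram has a circle component; there the circle lying in $S^+$ bounds a disk region disjoint from $E$, so an internal disk region is present directly. Combining with the innermost-disk reduction completes the proof.

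The main obstacle is the bookkeeping in the contradiction step: one must verify that ``every face meets $E$'' really forces each face to be a disk, ruling out annular or higher-genus faces, which would themselves expose an interior boundary circle and hence an internal region, and one must run the Euler-characteristic count with the correct cell structure even when some triple lines are circles. Once these degenerate possibilities are absorbed into the ``an internal region exists'' dichotomy and the no-vertex case is dispatched through the circle-component description, the innermost-disk step finishes the argument cleanly.
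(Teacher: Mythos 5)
Your innermost-disk reduction is fine (it even fills in a step the paper leaves implicit, namely why ``an internal region exists'' upgrades to ``an internal \emph{disk} region exists''), and so is the chain ``no internal face $\Rightarrow$ $G$ is a forest, $\hat G$ is connected, every face is a disk, $F=b_1(\hat G)$''. The genuine gap is the next equality, $b_1(\hat G)=1$. Ruling out cycles of $G$ does not rule out cycles of $\hat G=G\cup E$: whenever a tree of the forest $G$ attaches to $E$ at two or more points, a path in that tree together with an arc of $E$ is a cycle of $\hat G$, and such a cycle bounds a disk whose interior faces can all touch that arc of $E$, so your hypothesis ``every face meets $E$'' does not exclude it. Worse, these mixed cycles are unavoidable: with the actual local structure of a DS-diagram, $\hat G$ has $3n_v$ vertices ($n_v$ in $S^+$, $2n_v$ on $E$) and $4n_v$ edges ($2n_v$ on $E$, $2n_v$ in $S^+$), so $b_1(\hat G)=n_v+1$, and your count returns $F=n_v+1$ faces --- which is exactly the number of regions that $\chi(P)=1$ predicts. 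So no contradiction can be extracted from $\chi(D)=1$ together with $\chi(P)=1$ and the $4$-valence of $S(P)$: those counts are automatically consistent whether or not internal regions exist, and your final step collapses.

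What is missing is a use of the valences of $G$ itself, and this is precisely what the paper's proof exploits. The graph $G$ has exactly $n_v$ univalent vertices (the $E$-vertices whose third edge enters $S^+$) and $n_v$ trivalent vertices (those in $S^+$). A forest all of whose vertices are univalent or trivalent satisfies $(\#\text{leaves})=(\#\text{trivalent vertices})+2c$, where $c$ is the number of components; here this reads $n_v=n_v+2c$, forcing $c=0$, i.e.\ $G=\varnothing$, hence $n_v=0$, which lands you in the no-vertex (circle-component) case that you already dispatched correctly. Equivalently, the paper encodes this valence count as $\chi(G)=2n_v-2n_v=0$, deduces $f(\bar G)=\chi(\bar G)$ where $\bar G$ is $G$ with its internal regions filled in, and uses planarity ($\chi(\bar G)>0$, since $\bar G$ is compact in $S^2$ with connected complement) to conclude $f(\bar G)>0$; your innermost argument then produces the disk. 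If you replace the unjustified step $b_1(\hat G)=1$ by this leaf count, your proof closes and becomes essentially the paper's.
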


\begin{proof}
Let $n_v$ be the number of vertices of $P$.
Observing the geometric completion of $M\setminus P$, we can verify that
the DS-diagram has $n_v$ vertices on $S^+$ and $2n_v$ vertices on the E-cycle. 
If the DS-diagram has a circle component then the assertion follows. 
We assume that it has no circle component. 
Let $G$ be the subgraph of the DS-diagram consisting of edges lying in $S^+$ and vertices adjacent to these edges. Note that the edges on the E-cycle are not contained in $G$. 
We denote by $\bar G$ the union of $G$ and the regions on $S^+$ bounded by $G$. 
Let $e(\bar G)$ and $f(\bar G)$ be the numbers of  edges and regions of $\bar G$, respectively.
Since $G$ has $n_v$ univalent vertices and $n_v$ trivalent vertices,
we have $n_v+3n_v=2e(\bar G)$. On the other hand, 
the Euler characteristic $\chi(\bar G)$ of $\bar G$ is given as 
\[
   \chi(\bar G)=2n_v-e(\bar G)+f(\bar G).
\]
These equalities imply that $f(\bar G)=\chi(\bar G)$.
Since $\chi(\bar G)>0$, we have $f(\bar G)>0$.
Hence there exists an internal region homeomorphic to a disk.
\end{proof}

%\begin{lemma}\label{lemma_pos}
%Any positive flow-spine has a preferred region.
%\end{lemma}

%\begin{proof}
%Let $P$ be a positive flow-spine. Let $G$ denote the oriented trivalent graph of the DS-diagram on $S^+$. Since all vertices of $P$ are of $\ell$-type, at each vertex on $\partial S^+$, the edge of $G$ lying the interior of $S^+$ is always oriented to the inside, see Figure~\ref{fig8}.
%
%\begin{figure}[htbp]
%\begin{center}
%\includegraphics[width=5.0cm, bb=235 589 358 712]{fig_s9.pdf}
%\caption{The DS-diagram of a positive flow-spine on $S^+$. The boundary of $S^+$ is the E-%cycle.}\label{fig8}
%\end{center}
%\end{figure}
%
%Start from a vertex on the E-cycle, say $v$ in the figure, and travel according to the orientations on the edges. Then, due to the properties mentioned above, the path of this travel yields a simple closed curve in the interior of $S^+$. Therefore, the graph on $S^+$ has a region not adjacent to the E-cycle. As proved in~\cite[Lemma~4.6]{IIKN20},  such a region is always a preferred region. This completes the proof.
%\end{proof}

\begin{proof}[Proof of Corollary~\ref{cor1}]
By Lemma~\ref{lemma_disk}, the DS-diagram of a flow-spine has an internal region homeomorphic to a disk.
It is proved in~\cite[Lemma~4.6]{IIKN20} that 
if a flow-spine is positive then an internal region is always a preferred region,
and the proof written there works for negative flow-spines also. 
Thus, the assertion follows from Theorem~\ref{thm2}.
\end{proof}

\section{Examples}\label{sec6}

In this section, we give an S-stable characteristic foliation given by a $1$-form $\beta$ with $d\beta>0$ on the abalone explicitly. We also give an example of branched standard spine that has an S-stable foliation without simple tangency points.

\subsection{Poincar\'{e}-Hopf}

We introduce a lemma that is useful for determining singularities of foliations on flow-spines. 
Let $P$ be a flow-spine of a closed, oriented, smooth $3$-manifold $M$
and $\mathcal F$ be a foliation on $P$ with only elliptic and hyperbolic singularities.
Assume that $\mathcal F$ is tangent to $S(P)$ at a finite number of points in the interior of triple lines and the singularities do not lie on $S(P)$. 
Let $e$ be the number of elliptic singularities, $h$ be the number of hyperbolic singularities and $t_+$ and $t_-$ be the 
%number 
numbers of simple tangency points of $\mathcal F$ with index $+1$ and $-1$, respectively.

\begin{lemma}\label{lemma_PH}
Suppose that the foliation $\mathcal F$ is in the above setting.
Then the following equality holds:
\[
   e-h=1+\frac{t_+-t_-}{2}.
\]
Note that if $\mathcal F$ is S-stable then $t_-=0$.
\end{lemma}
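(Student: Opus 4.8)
The plan is to prove this as a Poincar\'e--Hopf type formula, transferring the count of singularities of $\mathcal F$ on the flow-spine $P$ to an Euler characteristic computation on an auxiliary closed surface. The natural surface to use is $\partial B^3 = S^+ \cup (\text{E-cycle}) \cup S^-$ coming from the DS-diagram, since this sphere records all the combinatorial data of $P$ and carries an induced line field from the flow and the foliation. First I would observe that the Euler characteristic identity $\chi(S^2) = 2$ will be the source of the ``$1$'' on the right-hand side (after dividing the $2$ appropriately between the two copies of the regions that $S^+$ and $S^-$ represent), while $e - h$ is exactly the signed count of singularities of $\mathcal F$ in the interiors of the regions of $P$, elliptic points counting $+1$ and hyperbolic points counting $-1$.

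The key steps, in order, are as follows. I would first extend the foliation $\mathcal F$ to a vector field or line field on all of $P$ whose zeros are precisely the elliptic and hyperbolic singularities, contributing local indices $+1$ and $-1$ respectively. Next I would analyze the behaviour along $S(P)$: at a simple tangency point the foliation is tangent to a triple line, and I would compute the local contribution of such a point to a boundary-term in the Poincar\'e--Hopf count. The index conventions in Figure~\ref{fig3} should make a tangency of index $+1$ contribute $+\tfrac12$ and one of index $-1$ contribute $-\tfrac12$ to the relevant balance, which is why the factor $(t_+ - t_-)/2$ appears. The third step is to apply the classical Poincar\'e--Hopf theorem on the closed surface obtained by doubling or capping the regions of $P$ appropriately along $S(P)$, so that the interior singularities, the vertex contributions, and the tangency contributions all sum to $\chi$ of that surface. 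Since $S(P)$ is the image of an immersion of a finite number of circles (and for a flow-spine, a single circle), the singular set contributes zero net Euler characteristic, and the co-orientation coming from $\beta$ guarantees that the line field is genuinely orientable away from the tangency points.

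The main obstacle I expect is the careful bookkeeping of the tangency contributions along $S(P)$, specifically justifying the factor of $\tfrac12$. Each triple line is shared by the completions of the adjacent regions, and a tangency point sits on such a shared edge; one must make sure each tangency is counted with the correct multiplicity and sign as the leaves cross from transverse-in to transverse-out along the edge. The cleanest way to handle this is to pass to the geometric completion $\bar R_i$ of each region, apply Poincar\'e--Hopf-with-boundary (the Morse-theoretic version counting tangencies of a line field to the boundary) on each $\bar R_i$ separately, and then sum over all regions, tracking how the shared tangency points and the vertex corners are distributed among the adjacent region-completions. The sum of the boundary tangency terms should telescope, via the structure of $S(P)$ as closed immersed circles, into exactly $(t_+ - t_-)/2$, while the sum of the $\chi(\bar R_i)$ collapses to the single term $1$. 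The final remark that $t_- = 0$ when $\mathcal F$ is S-stable is immediate from the definition of S-stability, which forces all simple tangency indices to be $+1$.
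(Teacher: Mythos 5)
Your overall strategy---transferring everything to the sphere $\partial B^3$ of the DS-diagram, on which each region of $P$ appears twice so that the interior singularities contribute $2e$ and $2h$, and then applying Poincar\'e--Hopf on $S^2$---is exactly the paper's route. But your argument has a genuine gap at precisely the point you yourself flag as ``the main obstacle'': you never determine how a simple tangency point contributes, and the fallback you propose to settle this would fail. The paper's key observation (Figure~\ref{fig8}) is that no fractional bookkeeping is needed at all: the lift of a simple tangency point of index $+1$ produces exactly \emph{one} genuine hyperbolic singularity of the induced foliation on $\partial B^3$, and the lift of a tangency of index $-1$ produces exactly \emph{one} elliptic singularity, the remaining lifts being regular points. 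Hence the sphere carries $2e+t_-$ elliptic and $2h+t_+$ hyperbolic singularities, and Poincar\'e--Hopf gives the whole-number identity $(2e+t_-)-(2h+t_+)=2$, which is the stated formula. Your ``$\pm\tfrac12$ per tangency'' only appears after dividing this identity by $2$; as a local contribution it is asserted, not derived, and the ambiguity in what your ``relevant balance'' means (the signs flip depending on whether you count on the left or the right of the formula) is a symptom of the missing local analysis.

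The concrete errors are in the fallback. First, the claim that $\sum_i \chi(\bar R_i)$ ``collapses to the single term $1$'' is false: since $\chi(P)=1$ and $\chi(S(P))=-n_v$ (the singular set is a $4$-valent graph with $n_v$ vertices), one has $\sum_i \chi(\bar R_i)=1+n_v$; for the abalone this is $2$, and for the spine of Section~\ref{ex3} it is $4$. The discrepancy is exactly the corner contribution of the vertices, which your final telescoping claim silently drops even though you mention corners earlier. Second, the boundary tangency terms cannot telescope to $(t_+-t_-)/2$ in the way you assert: each triple line of $P$ is adjacent to \emph{three} region-sides, so each simple tangency point occurs three times in $\sum_i(\text{boundary tangencies of }\bar R_i)$, with different local configurations on the three sides, not in matched cancelling pairs along ``shared edges.'' A region-by-region Poincar\'e--Hopf argument might be salvageable with this extra bookkeeping, but as stated both identities your fallback rests on are wrong, while the step they were meant to justify is the actual content of the lemma. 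Your final remark, that S-stability forces $t_-=0$, is correct and immediate from the definition.
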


\begin{proof}
According to the definition of DS-diagram,
the regions and triple lines of $P$ can be described on the boundary of the closed $3$-ball $B^3$
obtained as the geometric completion of the complement $M\setminus P$. 
Thus, we may describe $\mathcal F$ on the boundary $\partial B^3$ of $B^3$,
which has $2e$ elliptic singularities and $2h$ hyperbolic singularities.
Each simple tangency point of index $+1$ becomes as shown on the top-middle in Figure~\ref{fig8}
and it can be regarded as a hyperbolic singularity as shown on the right. 
Each simple tangency point of index $-1$ becomes as shown on the bottom in the figure
and it can be regarded as an elliptic singularity.
By Poincar\'{e}-Hopf formula on the sphere $\partial B^3$,
we have $(2e+t_-)-(2h+t_+)=2$. Thus the assertion follows.
\end{proof}

\begin{figure}[htbp]
\begin{center}
\includegraphics[width=12.0cm, bb=129 544 471 712]{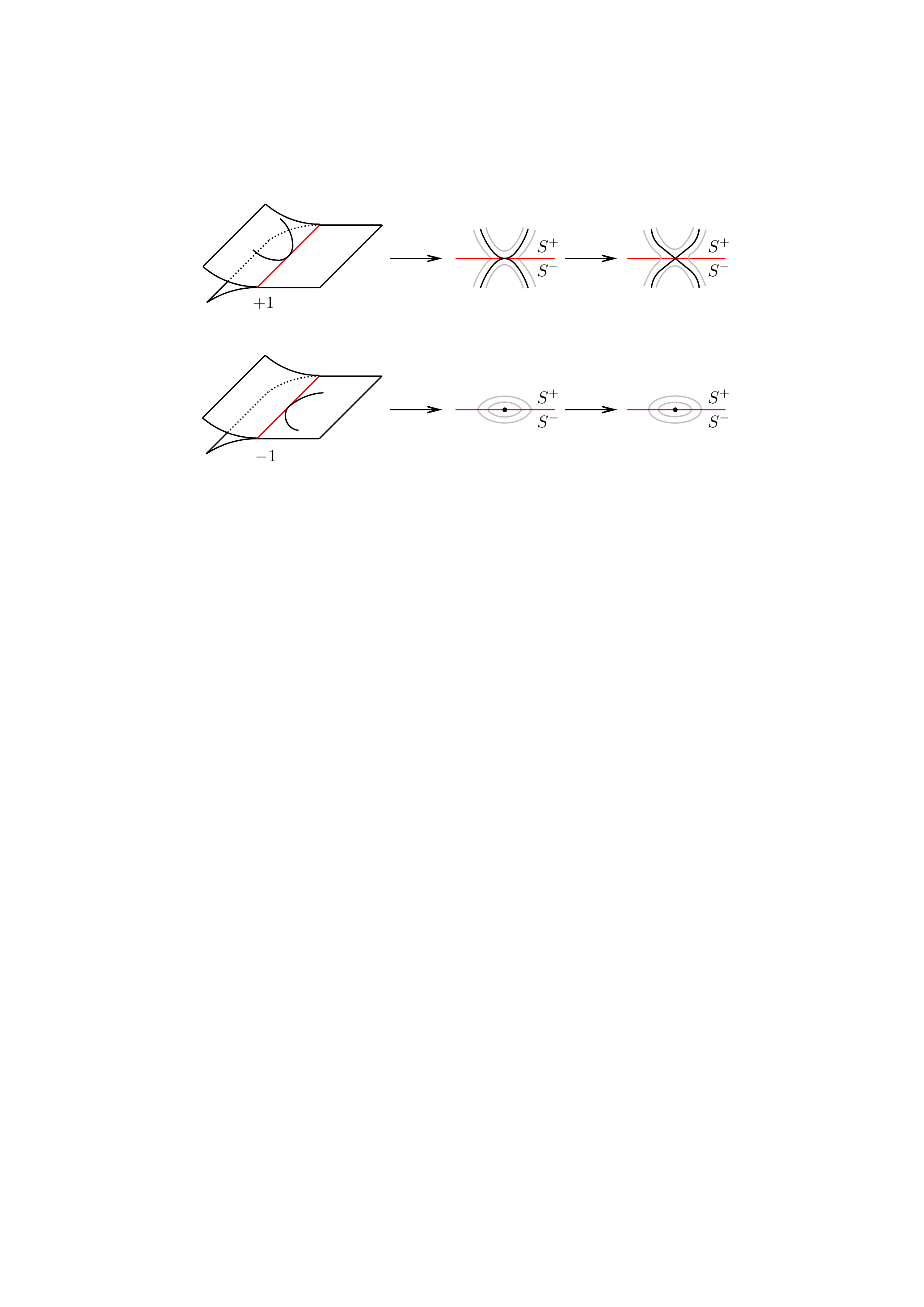}
\caption{Simple tangency points and corresponding singularities on $\partial B^3$.}\label{fig8}
\end{center}
\end{figure}

\subsection{Foliation on the abalone}\label{ex1}

The branched polyhedron $A$ obtained from the neighborhood of the singular set shown in Figure~\ref{fig11} by attaching two disks $D_1$ and $D_2$ is called the {\it abalone}. This is a flow-spine of $S^3$.
We can easily see that $(E_1,E_2)\in C(A)$ if and only if 
\[
\left\{
\begin{split}
& -E_1>0 \\
& 2E_1+E_2>0.
\end{split}
\right.
\]
This system of inequalities has a solution, for example 
%$(-1,11)\in\Real^2$.
$(-1,13)\in\Real^2$. 
Hence $A$ is admissible. 

\begin{figure}[htbp]
\begin{center}
\includegraphics[width=10.0cm, bb=129 528 551 712]{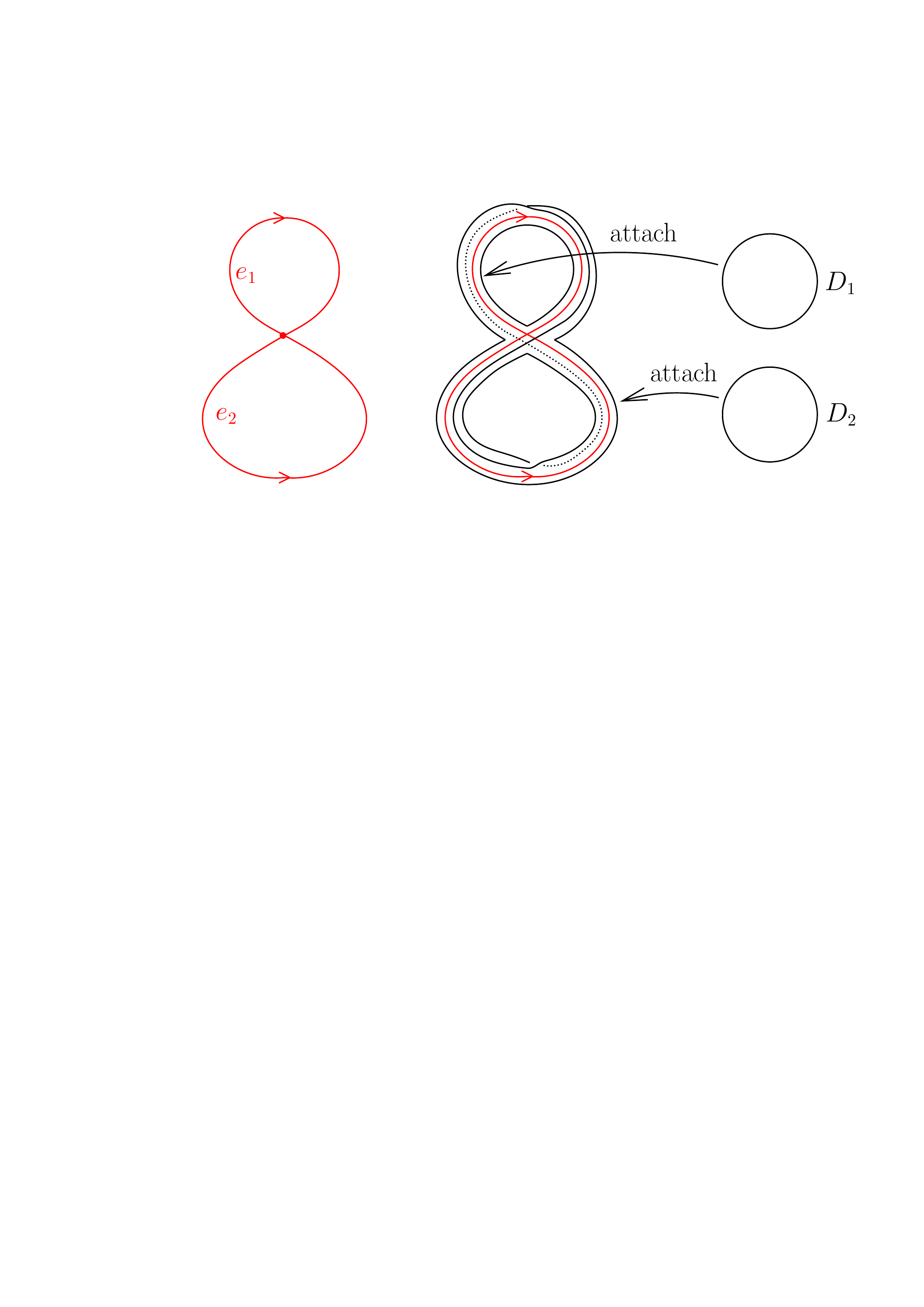}
\caption{The abalone $A$.}\label{fig11}
\end{center}
\end{figure}

We give an S-stable foliation on $A$ defined by a $1$-form $\beta$ with $d\beta>0$ explicitly.
Set 
%$(E_1,E_2)=(-1,11)\in C(A)$.
$(E_1,E_2)=(-1,13)\in C(A)$. 
According to the construction in the proof of Theorem~\ref{thm1}, one can construct an S-stable foliation on $\Nbd(S(A);A)$ defined by a $1$-form $\beta_0$ with $d\beta_0>0$ as the left two figures in Figure~\ref{fig12}. 
Let $R_1$ and $R_2$ be the regions of 
%% $P$ 
$A$ containing $D_1$ and $D_2$, respectively. 
We describe four leaves on $R_2$ as shown on the right in Figure~\ref{fig12}.
We denote these leaves by $\ell_1,\ldots,\ell_4$.
For each $\ell_i$, we can find a $1$-form $\beta_{\ell_i}$ on $\Nbd(\ell_i;R_2)$ so that 
$\beta_{\ell_i}=\beta_0$ on $\Nbd(\ell_i;R_2)\cap\Nbd(\partial R_2;R_2)$
and $\ell_i$ is a leaf of the foliation defined by $\beta_{\ell_i}=0$.

\begin{figure}[htbp]
\begin{center}
\includegraphics[width=14.0cm, bb=129 554 588 710]{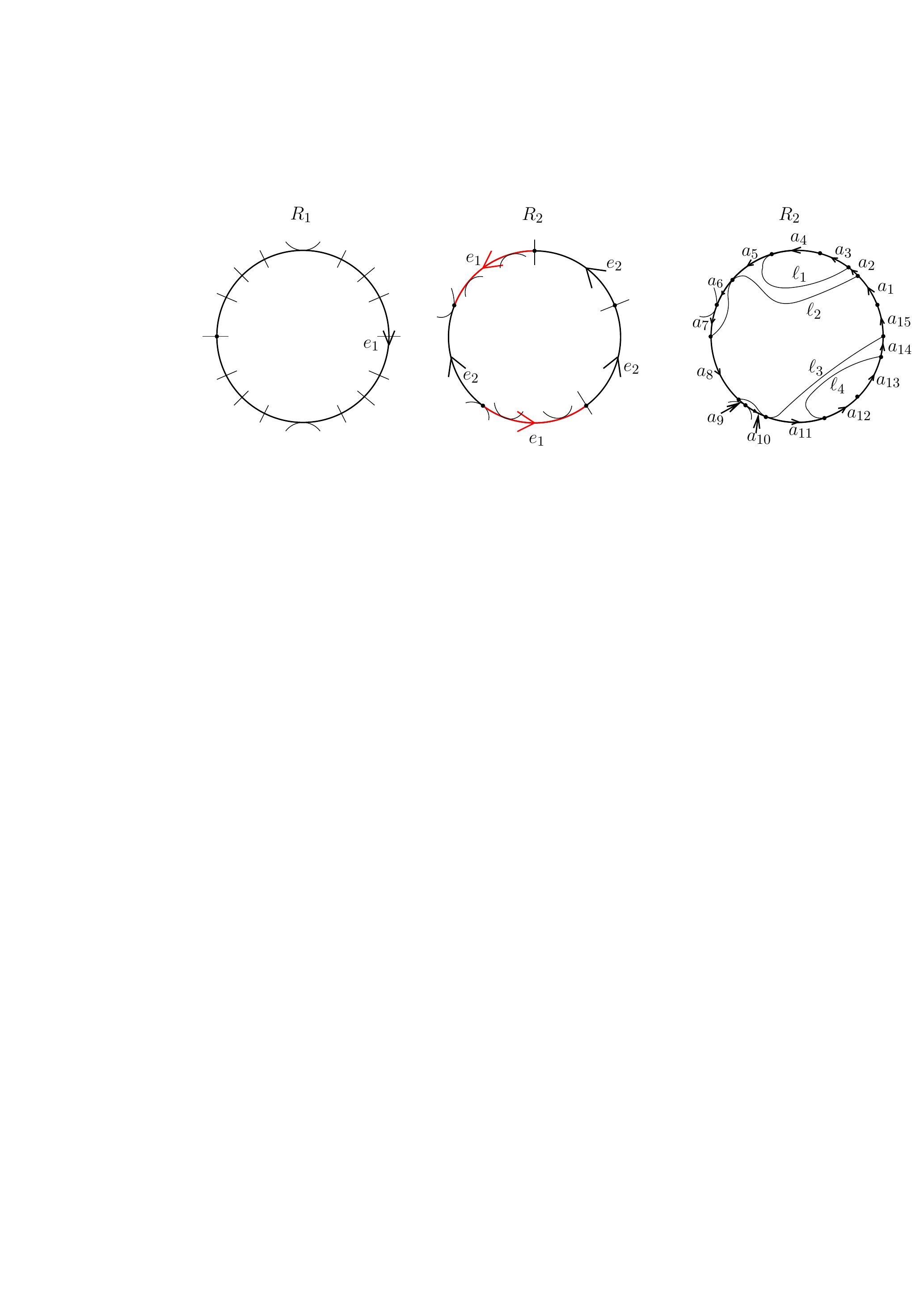}
\caption{An S-stable foliation on $\Nbd(S(A);A)$ (left and middle) and the leaves $\ell_1,\ldots,\ell_4$ on 
%$S^+$ 
$R_2$ (right).}\label{fig12}
\end{center}
\end{figure}

We decompose $\partial R_2$ into $15$ arcs by the endpoints of the leaves $\ell_1,\ldots,\ell_4$, the tangent points of these leaves with $\partial R_2$ and the vertices on $\partial R_2$. 
We label these arcs as $a_1,\ldots,a_{15}$ as shown on the right in Figure~\ref{fig12}.
For the arcs $a_1,\ldots,a_{15}$, we assign real numbers $A_1,\ldots,A_{15}$ so that
\[
\begin{split}
& A_1+A_2+A_3=E_2, \hspace{9.5mm} A_4+A_5+A_6=E_1, \hspace{7mm} A_7+A_8+A_9=-E_2, \\
& A_{10}+A_{11}+A_{12}=E_1, \hspace{5mm} A_{13}+A_{14}+A_{15}=E_2
\end{split}
\]
and
\[
\begin{split}
&A_{15}+A_1+A_8>0, \hspace{5mm} 
A_2+A_5>0, \hspace{7.7mm} 
A_3>0, \hspace{7mm} 
A_4>0, \hspace{7mm} 
A_6+A_7>0, \\
&A_9+A_{10}>0, \hspace{14.2mm} 
A_{11}+A_{14}>0, \hspace{5mm} 
A_{12}>0, \hspace{5.5mm} 
A_{13}>0,
\end{split}
\]
and $(A_4, A_5, A_6)=(A_{10}, A_{11}, A_{12})$. 
The five equalities mean that the assignment of real numbers to $a_1,\ldots,a_{15}$ is a refinement of the assignment of those to $e_1$ and $e_2$.
The nine inequalities mean that, for each region bounded by $\partial R_2\cup\bigcup_{i=1}^4\ell_i$, the sum of the real numbers along its boundary is positive,
which is necessary for applying Lemma~\ref{lemma_TW}. 
Note that we set the real number assigned to $\ell_i$ to be $0$ since $\ell_i$ will be a leaf of the foliation
obtained by a $1$-form.
The last equality is needed since, for each $i=4,5,6$, the two edges $a_i$ and $a_{i+6}$ correspond to the same arc on the edge $e_1$ divided by the two simple tangency points. 
This system of equalities and inequalities has a solution, for example as
\[
%   (A_1,\ldots,A_{15})=(3,7,1,1,-6,4,-3,-5,-3,4,-6,1,1,7,3).
   (A_1,\ldots,A_{15})=(6,6,1,2,-5,2,-1,-11,-1,2,-5,2,1,6,6). 
\]
Let $\beta'$ be a $1$-form defined on a small neighborhood $N$ of $S(P)\cup (\cup_{i=1}^4 \ell_i)$ in $P$ so that 
\begin{itemize}
\item$d\beta'>0$ on $N$,
\item the characteristic foliation on $N$ has leaves shown on the right in Figure~\ref{fig12},
\item $\int_{a_j}\beta'=A_j$ for $j=1,\ldots,15$.
\end{itemize}
Such a $1$-form can be easily constructed as in the proof of Theorem~\ref{thm1} (cf.~\cite[the proof of Lemma 6.3]{IIKN20}). We may extend $\beta'$ to the whole $P$ by applying Lemma~\ref{lemma_TW}.

\begin{remark}\label{rem62}
The arcs $\ell_1,\ldots,\ell_4$ are {\it Legendrian} if we consider a contact form $\alpha$ on $\Nbd(A;M)$ whose characteristic foliation coincides with the foliation defined by $\beta'=0$. 
When we describe a characteristic foliation of a flow-spine with a transverse Reeb flow, we need to choose the positions of these leaves so that the integration of $\beta'$ along the boundary of each region
separated by these leaves is positive. 
This gives a strong restriction for positions of leaves, which is different from the case of characteristic foliations on surfaces. 
\end{remark}

Consider a foliation on $A$ shown in Figure~\ref{fig13}, which 
coincides with the one defined by $\beta'$ on $N$. 
It has two elliptic singularities with positive divergence and no hyperbolic singularities (cf. Lemma~\ref{lemma_PH}). 
It is not difficult to find a $1$-form $\beta$ on $P$ with $d\beta>0$ that defines the foliation in the figure. 
For example, let $R$ be the region on $R_2$ bounded 
by the leaves $\ell_3$ and $\ell_4$ and the edges $a_{11}$ and $a_{14}$ 
and regard $R$ as a rectangle with coordinates $(x,y)$ so that the foliation is parallel to the $x$-axis.
On $\Nbd(\partial R;R)$, $\beta$ is given in the form $\varphi(x,y)dy$, where $\varphi(x,y)$ is a smooth function 
on $\Nbd(\partial R;R)$ with $\frac{\partial \varphi}{\partial x}(x,y)>0$. 
Remark that 
$\int_{a_{11}}\varphi(x,y)dy=A_{11}$, $\int_{a_{14}}\varphi(x,y)dy=A_{14}$ and $A_{11}+A_{14}>0$.
We observe the graph of the function $\varphi(x,y)$ on $\Nbd(\partial R;R)$,
and extend it to the whole $R$ so that $\frac{\partial \varphi}{\partial x}(x,y)>0$.
The $1$-form $\varphi(x,y)dy$ defines the foliation on $R$ and satisfies $d(\varphi(x,y)dy)>0$. 

The region $R'$ bounded by the leaf $\ell_4$ and the edges $a_{12}$ and $a_{13}$ has one elliptic singularity with positive divergence.
For any region $T$ bounded by an arc on $a_{12}\cup a_{13}$ and
two leaves connecting the elliptic singularity and the endpoints of the arc,
the $1$-form $\beta$ that we are going to make should satisfy $\int_T d\beta>0$. 
Therefore, the assignment to the edges $a_{12}$ and $a_{13}$ must satisfy the inequalities $A_{12}>0$ and $A_{13}>0$. Under this setting, the $1$-form on $R'$ with required property can be found by the same way as before.

Applying the same construction for other regions bounded by the E-cycles and the leaves $\ell_1,\ldots,\ell_4$, 
we can obtain a $1$-form $\beta$ on $A$ that defines the foliation in Figure~\ref{fig13} and satisfies $d\beta>0$.
The DS-diagram with this S-stable foliation is described in Figure~\ref{fig17}. 

\begin{figure}[htbp]
\begin{center}
\includegraphics[width=10.0cm, bb=129 496 544 710]{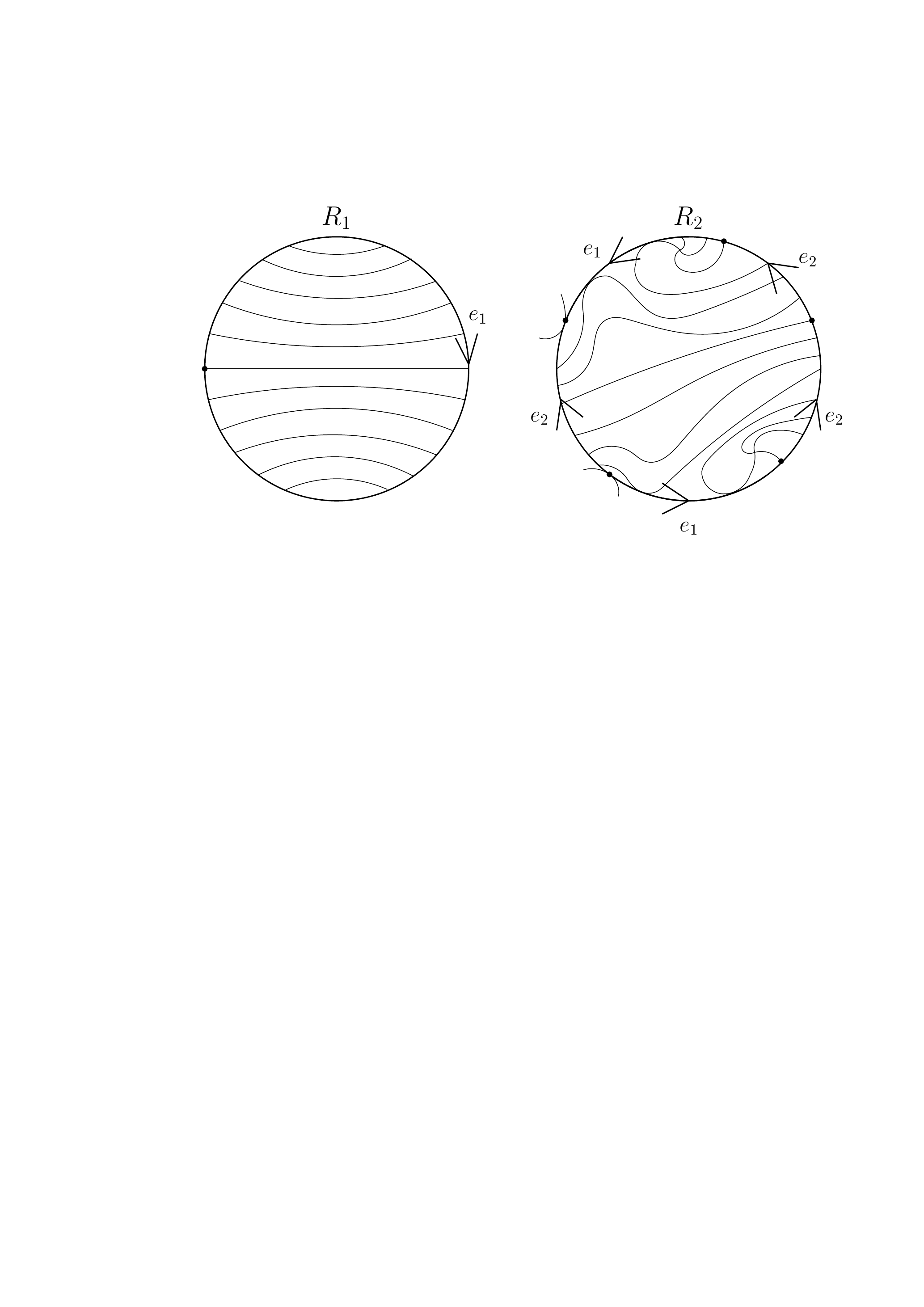}
\caption{An S-stable foliation on $A$ with $d\beta>0$ and with minimal number of simple tangency points.}\label{fig13}
\end{center}
\end{figure}

\begin{figure}[htbp]
\begin{center}
\includegraphics[width=6.0cm, bb=182 370 394 712]{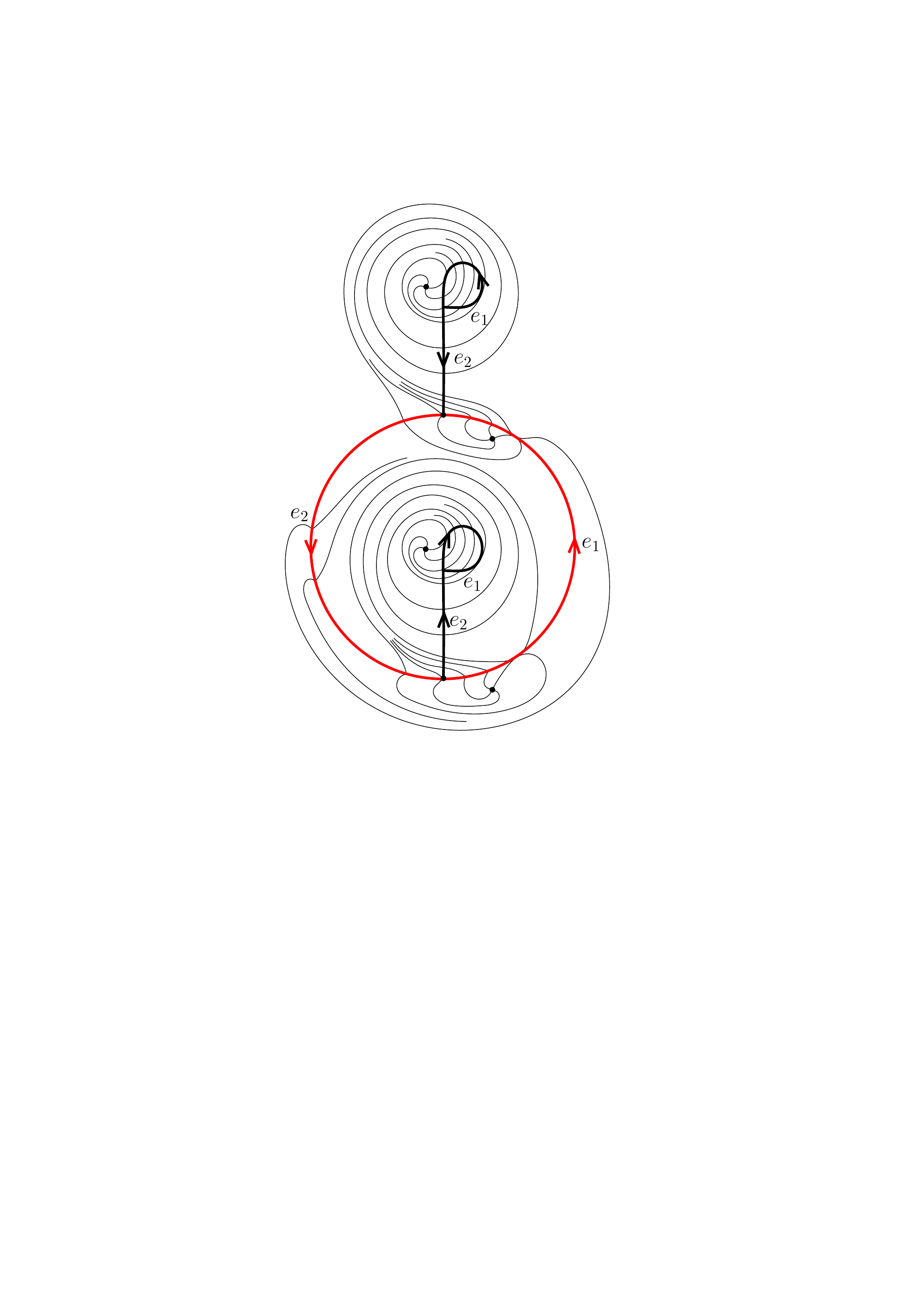}
\caption{The DS-diagram with the S-stable foliation in Figure~\ref{fig13}.}\label{fig17}
\end{center}
\end{figure}

The foliation in Figure~\ref{fig13} has two simple tangency points on $e_1$. 
Since $R_1$ is a preferred region,
due to Theorem~\ref{thm2}, we can conclude that there is no S-stable foliation defined by a $1$-form $\beta$ with $d\beta>0$ and with less simple tangency points.

A characteristic foliation of the flow-spine of the Poincar\'{e} homology $3$-sphere whose DS-diagram is 
the dodecahedron is observed by the first author in~\cite{Han18} by the same manner.

\subsection{An example without simple tangency point}\label{ex3}

%The branched simple polyhedron $P$ shown in Figure~\ref{fig9} has three vertices, six edges and four disk regions. 
Let $P$ be the branched simple polyhedron obtained from the branched polyhedron with boundary described in Figure~\ref{fig9} by attaching four disks along the four connected components of the boundary of $P$. It has three vertices, six edges and four disk regions. 
The orientations of the edges $e_1, \ldots, e_6$ in the figure are those 
induced from the adjacent regions by the rule in Figure~\ref{fig2}. 
The Euler characteristic of $P$ is $3-6+4=1$, and hence 
the Euler characteristic of the boundary of a thickening of $P$ is $2$. 
We may easily verify that the boundary of a thickening of $P$ is connected and hence it is $S^2$. 
%the boundary of a thickening of $P$ is $S^2$. 
This means that $P$ is a branched standard spine of a closed, oriented $3$-manifold.
We can check that the DS-diagram of this spine, forgetting the branching, is the diagram (1-10) in~\cite{Ish85} and conclude that the $3$-manifold is $S^3$. 

\begin{figure}[htbp]
\begin{center}
\includegraphics[width=7.0cm, bb=129 464 480 706]{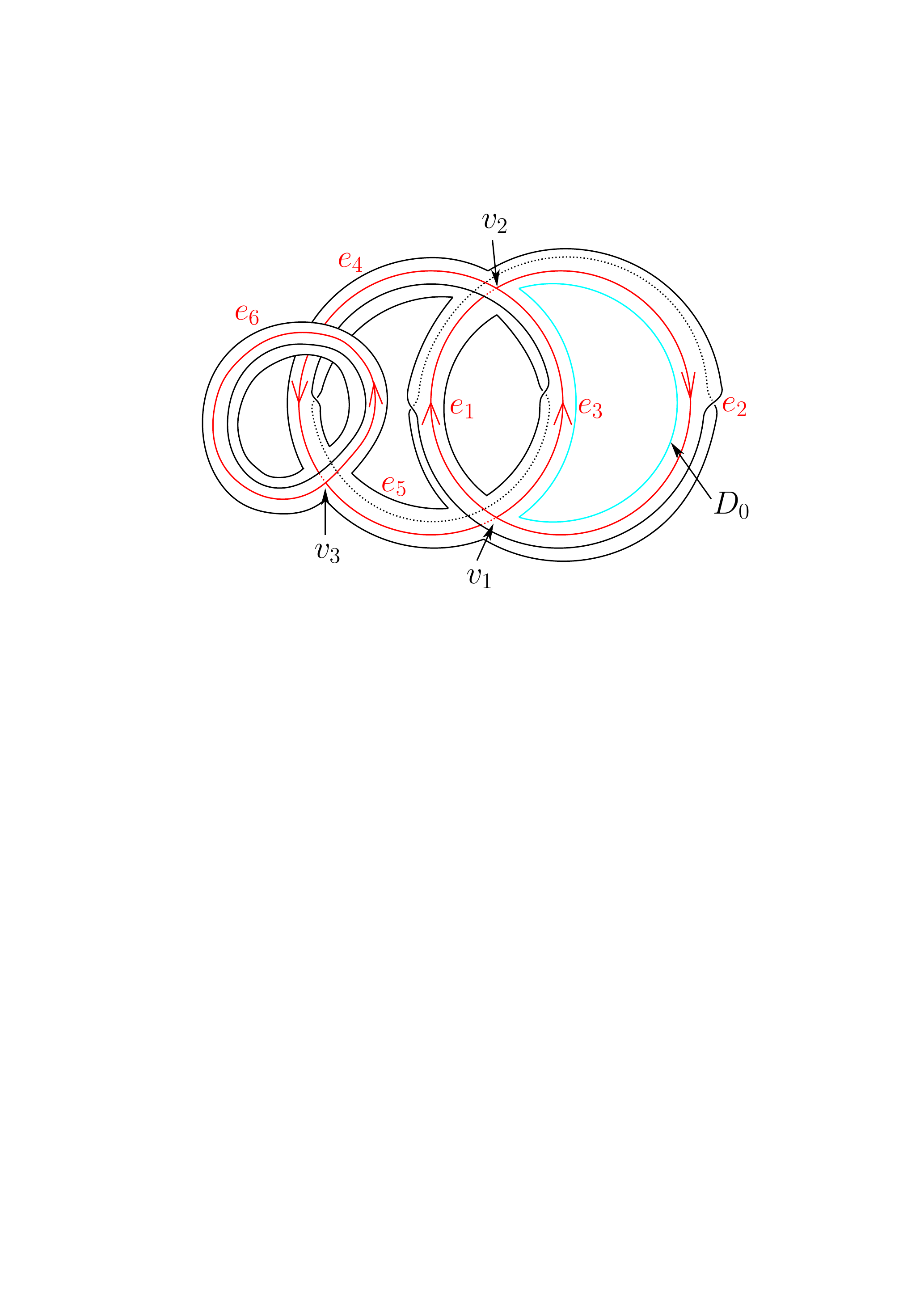}
\caption{A branched standard spine that admits an S-stable foliation defined by a $1$-form $\beta$ with $d\beta>0$ and without simple tangency points.}\label{fig9}
\end{center}
\end{figure}

The region containing the disk $D_0$ in the figure is a preferred region.
The defining inequalities of $C(P)$ are
$E_6>0$, $-E_2-E_3>0$, $E_1+E_2-E_6>0$ and $E_2+2E_3+E_4+E_5+E_6>0$.
%and hence $P$ is admissible.
This system of inequalities has a solution, for example as 
$(6,1,-2,-1,-1,6)\in\Real^6$. Hence $P$ is admissible. 
We choose the projections $\pr_{v_i}$ for each of vertices $v_1$, $v_2$ and $v_3$
so that their H-pieces become as shown in Figure~\ref{fig10}.
Under this setting, we can obtain an $S$-stable foliation defined by a $1$-form $\beta$ with $d\beta>0$ and without simple tangency points.

\begin{figure}[htbp]
\begin{center}
\includegraphics[width=9.0cm, bb=129 562 533 707]{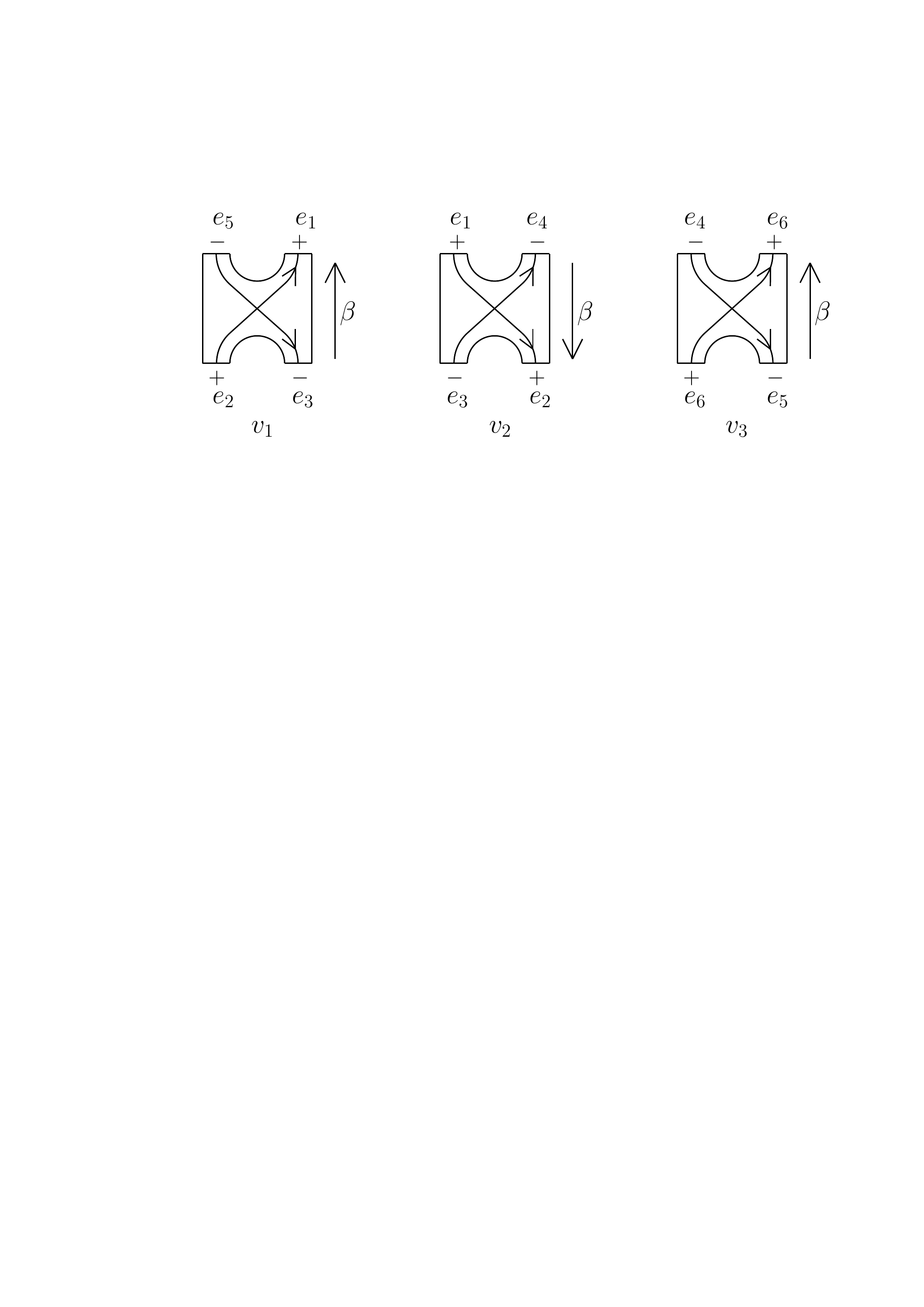}
\caption{The H-pieces of $v_1$, $v_2$ and $v_3$.}\label{fig10}
\end{center}
\end{figure}

We here give an S-stable foliation on $P$ defined by a $1$-form $\beta$ with $d\beta>0$ and without simple tangency points on $P$ explicitly.
Set
\[
   (E_1, E_2, E_3, E_4, E_5, E_6)=(6,1,-2,-1,-1,6)\in C(P).
\]
Let $R_1, R_2, R_3, R_4$ be the regions of $P$ bounded by the sequences of edges $\{e_6\}$,
 $\{-e_2, -e_3\}$, $\{e_1,e_2,-e_5,-e_6,e_5\}$ and $\{e_1,e_4,e_5,e_3,-e_1,e_3,e_4,e_6,-e_4,e_2\}$, respectively, see Figure~\ref{fig16}. Draw four leaves $\ell_1, \ell_2, \ell_3$ and $\ell_4$ as shown in the figure.
The edge $e_1$ on the boundary of $R_3$ divides into two oriented arcs by an endpoint of $\ell_1$, named $a_1$ and $a_2$ as in the figure. Similarly, we name $a_3, a_4, a_5, a_6, a_7$ and $a_8$ as in the figure. For these arcs $a_1,\ldots,a_8$, we assign real numbers $A_1,\ldots,A_8$ as
\[
   (A_1,A_2, A_3, A_4, A_5, A_6, A_7, A_8)=(1.6, 4.4, 0.8, 5.2, 1.5, 4.5, 3.2, 2.8)
\]
so that they satisfy $A_1+A_2=A_5+A_6=E_1$, $A_3+A_4=A_7+A_8=E_6$
and 
\[
\begin{split}
&E_5+A_1>0, \hspace{32mm}
A_2+E_2-A_4>0, \hspace{16mm}
-E_5-A_3>0,\\
&E_1+E_4+E_5+E_3-A_5>0, \hspace{4mm}
-A_6+A_8-E_4+E_2>0, \hspace{4mm}
E_3+E_4+A_7>0. 
%A_{15}+A_1+A_8>0. \hspace{5mm} 
\end{split}
\]
Thus, this assignment satisfies the required conditions for obtaining a $1$-form $\beta$ with $d\beta>0$ on $P$.
We may draw a foliation in the regions $R_1,\ldots, R_4$ described in Figure~\ref{fig16} so that
$R_1$ has only one elliptic singularity in the middle 
and the other regions have no singularity, and there exists a $1$-form $\beta$ with $d\beta>0$ on $P$ which gives this foliation as in the case of abalone.
There are two ``tangency points'' on the boundary of $R_3$ described in Figure~\ref{fig16}, one is at the intersection of the edges $e_5$ and $e_1$ and the other is of the edges $e_5$ and $e_6$,
and the boundary of $R_4$ also has two ``tangency points''.
The ``tangency point'' at the intersection of $e_5$ and $e_1$ appears in a neighborhood of the vertex $v_1$ shown on the left in Figure~\ref{fig10}, where the leaves of the foliation given by $\beta=0$ is horizontal. The other three ``tangency points'' appear by the same reason. 
Therefore, these leaves are transverse to the singular set of $P$
and the foliation given in Figure~\ref{fig16} has no simple tangency points.
The DS-diagram with this S-stable foliation is described in Figure~\ref{fig17}.

\begin{figure}[htbp]
\begin{center}
\includegraphics[width=10.0cm, bb=137 431 436 712]{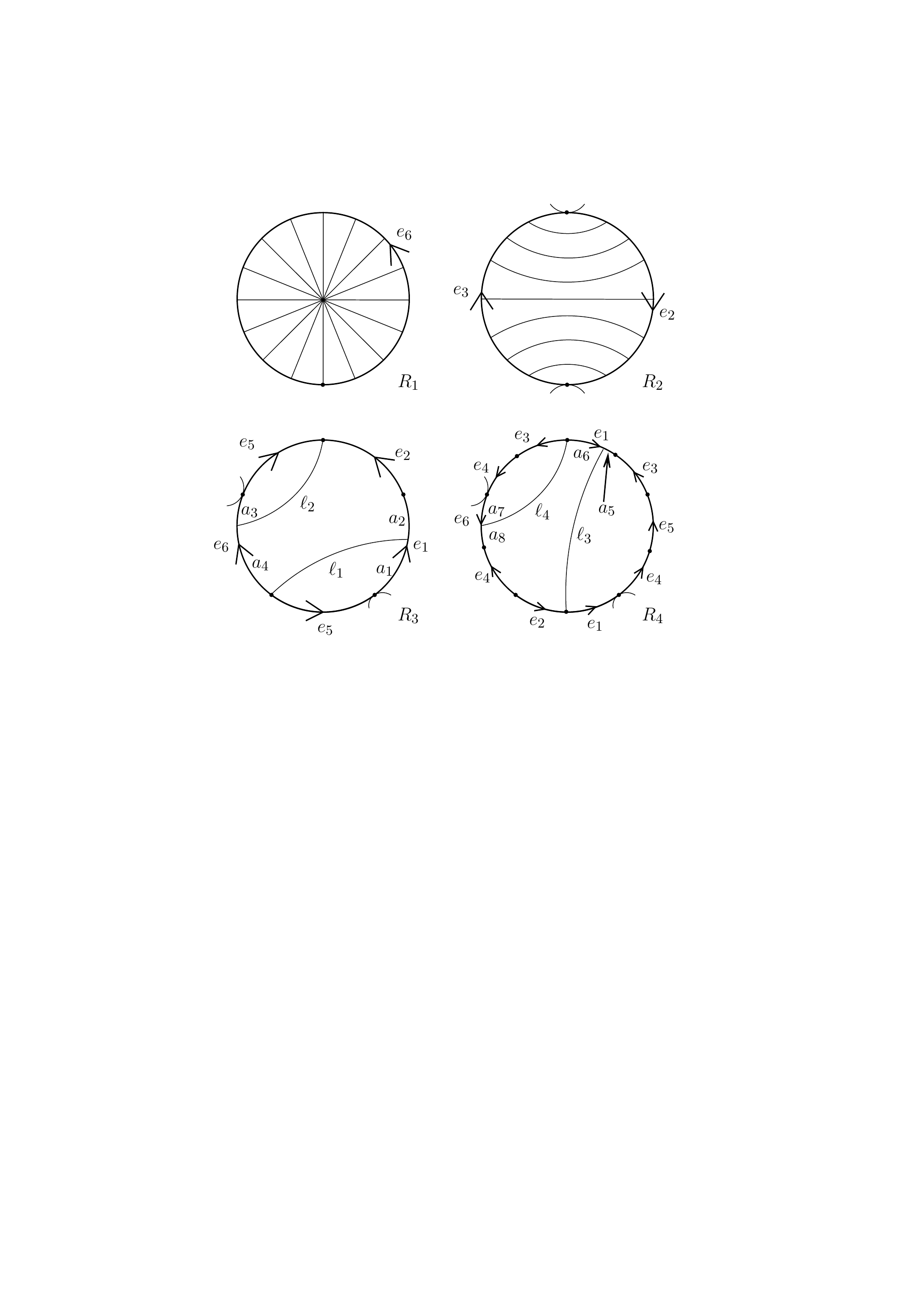}
\caption{An S-stable foliation on $P$ with $d\beta>0$ and without simple tangency points.}\label{fig16}
\end{center}
\end{figure}

\begin{figure}[htbp]
\begin{center}
\includegraphics[width=9.0cm, bb=137 339 457 712]{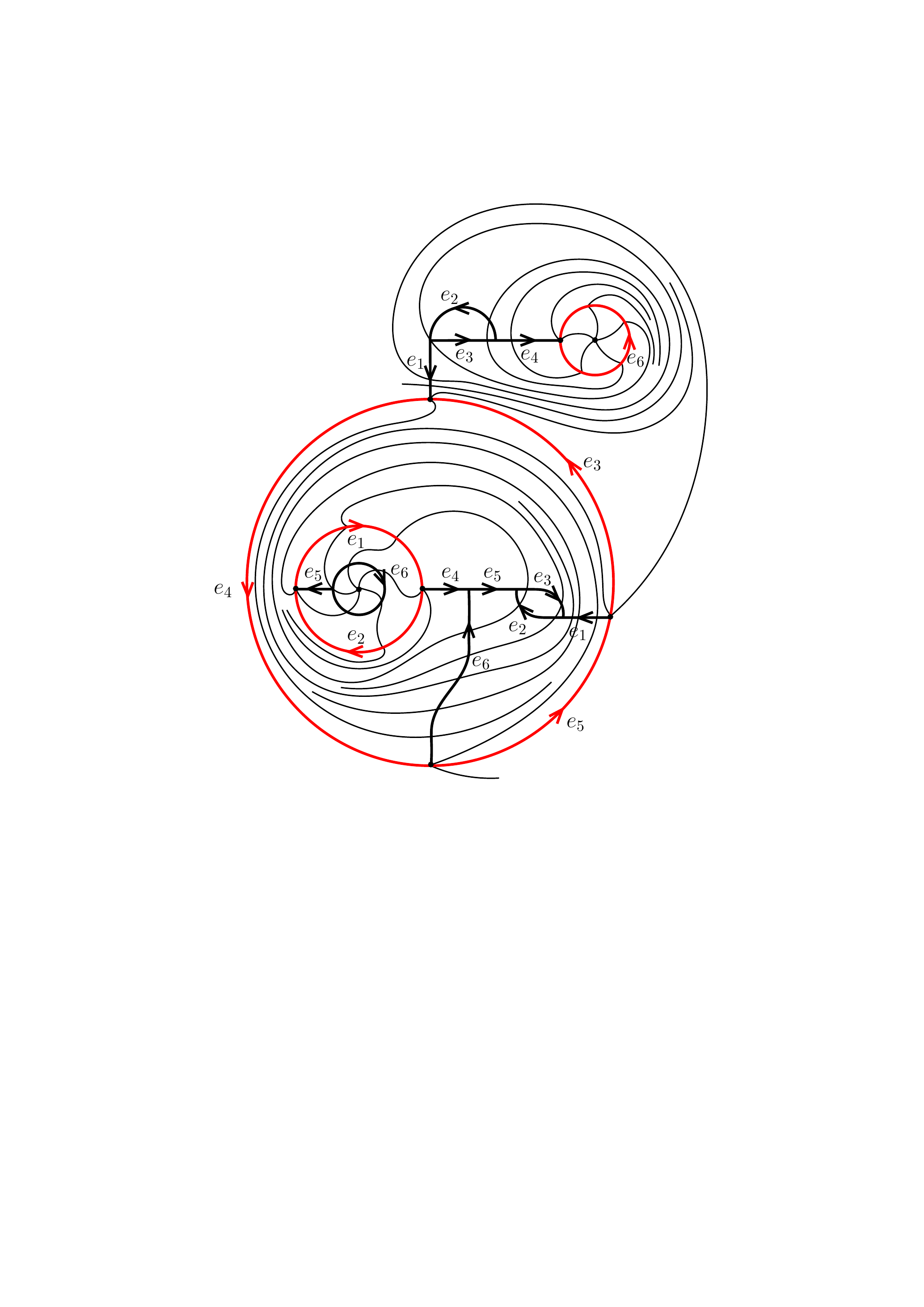}
\caption{The DS-diagram with the S-stable foliation in Figure~\ref{fig16}.}\label{fig18}
\end{center}
\end{figure}

\end{document}